\documentclass[12pt]{amsart}
\usepackage[dvips]{epsfig}
\usepackage{graphics}
\usepackage{latexsym}
\usepackage{verbatim}
\usepackage{amsmath}
\usepackage{amsthm}
\usepackage{amssymb}
\usepackage{hyperref}
\usepackage[]{hyperref}
\hypersetup{
    colorlinks=true,%
    filecolor=black,%
    linkcolor=black,%
    urlcolor=black  %
}
\usepackage [table]{xcolor}
\usepackage{multirow}
\usepackage{float}
\usepackage{tikz}
\usepackage{subfig}

\graphicspath{{images/}}	
\tracingstats=2		

\setcounter{tocdepth}{2}


\newtheorem{theorem}{Theorem}[section]

\newtheorem{proposition}[theorem]{Proposition}

\newtheorem{remark}[theorem]{Remark}

\newcommand\CC{\hbox{C\kern -.58em {\raise .54ex \hbox{$\scriptscriptstyle |$}}
  \kern-.55em {\raise .53ex \hbox{$\scriptscriptstyle |$}} }}
\newcommand\NN{\hbox{I\kern-.2em\hbox{N}}}
\newcommand\RR{\mathbb{R}}

\newcommand\ZZ{{{\rm Z}\kern-.28em{\rm Z}}}
\newcommand\Gradx{ \nabla_{\mathbf{x}}}
\newcommand\Gradxp{ \nabla_{\mathbf{x}_\perp}}
\newcommand\Gradv{ \nabla_{\mathbf{v}}}
\newcommand\Gradvp{ \nabla_{\mathbf{v}_\perp}}
\newcommand\Div{ \textrm{div}}

\newcommand\jj{\mathbf{J}}
\newcommand\xx{ \mathbf{x} }

\newcommand\zz{ \mathbf{z} }
\newcommand\XX{ \mathbf{X} }
\newcommand\VV{ \mathbf{V} }

\newcommand\s{\mathbf{s}}
\newcommand\yy{ \mathbf{y} }
\newcommand\kk{ \mathbf{k} }
\newcommand\vv{ \mathbf{v} }
\newcommand\ds{ \displaystyle }
\newcommand\vp{ \varphi }


\def\eps{\varepsilon }
\renewcommand\d{\partial}
\newcommand{\Id}{{\rm Id}}


\newcommand\bB{{\mathbf B}}

\newcommand\bE{{\mathbf E}}
\newcommand\bF{{\mathbf F}}

\newcommand\bW{{\mathbf W}}
\newcommand\bX{{\mathbf X}}
\newcommand\bY{{\mathbf Y}}
\newcommand\bZ{{\mathbf Z}}




\setlength{\oddsidemargin}{-0.5cm} 
\setlength{\evensidemargin}{-0.5cm}
\setlength{\textwidth}{17.cm} 
\setlength{\textheight}{24.cm}
\setlength{\topmargin}{-1.cm}


\def\signFF{\bigskip\bigskip\hspace{80mm}
\vbox{{\sc Francis Filbet\par\vspace{3mm}
Universit\'e de Toulouse III \& IUF \par
UMR5219, Institut de Math\'ematiques de Toulouse,\par
118, route de Narbonne\par
F-31062 Toulouse cedex,  FRANCE
\par\vspace{3mm}e-mail:} francis.filbet@math.univ-toulouse.fr }}

\def\signLMR{\bigskip\bigskip\hspace{80mm}
\vbox{{\sc  Luis Miguel Rodrigues \par\vspace{3mm}
Universit\'e de Rennes 1,\par
UMR6625, IRMAR,\par
263 avenue du General Leclerc,\par
F-35042 Rennes Cedex,  FRANCE
\par\vspace{3mm}e-mail:}  luis-miguel.rodrigues@univ-rennes1.fr }}

\begin{document}

\title[Particle-In-Cell Methods for the
  Vlasov-Poisson System]{Asymptotically stable particle-in-cell methods for the
  Vlasov-Poisson system with a strong external magnetic field}

\author{Francis Filbet, Luis Miguel Rodrigues}

\maketitle

\begin{abstract}
This paper deals with the numerical resolution of the Vlasov-Poisson
system with a strong external magnetic field  by Particle-In-Cell
(PIC) methods. In this regime, classical PIC methods are subject to
stability constraints on the time and space steps related to the small
Larmor radius and plasma frequency. Here, we propose an
asymptotic-preserving PIC scheme which is not subjected to these
limitations.  Our approach is based on first and higher order semi-implicit numerical
schemes already validated on dissipative systems \cite{BFR:15}. Additionally, when the magnitude of the external magnetic
field becomes large, this method provides a consistent PIC
discretization of the guiding-center equation, that is, incompressible
Euler equation in vorticity form. We propose several numerical experiments which provide a solid validation of the method and its underlying concepts.

\end{abstract}

\vspace{0.1cm}

\noindent 
{\small\sc Keywords.}  {\small High order time discretization;
  Vlasov-Poisson system; Guiding-centre model; Particle methods.}

\tableofcontents

\section{Introduction} 
\setcounter{equation}{0}
\label{sec:1}
Magnetized plasmas are encountered in a wide variety of astrophysical
situations, but also  in magnetic fusion devices such as tokamaks,
where a large external magnetic field needs to be applied in order to keep
 the particles on the desired tracks. 

In Particle-In-Cell (PIC) simulations of such devices, this large
external magnetic field obviously needs to be taken into account when
pushing the particles. However, due to the magnitude of the concerned
field this often adds a new time scale to the simulation and thus a
stringent restriction on the time step. In order to get rid of this
additional time scale, we would like to find approximate equations,
where only the gross behavior implied by the external field would be
retained and which could be used in a numerical simulation. In the
simplest situation, the trajectory of a particle in a constant magnetic field $\mathbf{B}$ is a
helicoid along the magnetic field lines with a radius proportional to the inverse of the magnitude of  $\mathbf{B}$. Hence, when this field becomes very
large the particle gets trapped along the magnetic field
lines. However due to the fast oscillations around the apparent
trajectory, its apparent velocity is smaller than the actual one. This
result has been known for some time as the guiding center approximation, and the link between the real and the apparent velocity
is well known in terms of $\mathbf{B}$.

Here, we consider a plasma constituted of a large number of charged
particles, which is described by the Vlasov equation coupled with the Maxwell or Poisson
equations to compute the self-consistent fields. It describes the
evolution of a system of particles under the effects of external and
self-consistent fields. The unknown $f(t,\xx,\vv)$, depending on the
time $t$, the position $\xx$, and the velocity $\vv$, represents the
distribution of particles in phase space for each species with $(\xx,\vv) \in \RR^d\times \RR^d$, $d=1,..,3$. Its behaviour is given by
the Vlasov equation, 
\begin{equation}
\label{eq:vlasov} 
\frac{\partial f}{\partial t}\,+\,\mathbf{v}\cdot\Gradx f \,+\,\mathbf{F}(t,\mathbf{x},\mathbf{v})\cdot\Gradv f \,=\, 0,
\end{equation}
where the force field $F(t,\xx,\vv)$ is coupled with the distribution function $f$ giving a nonlinear system.  
We first define  $\rho(t,\xx)$  the charge density  and  $\jj (t,\xx)$ the current density which are given by
\begin{equation*}
\rho(t,\xx) = q\int_{\RR^d} f(t,\xx,\vv)d\vv,\quad \jj(t,\xx) = q\int_{\RR^d} \vv\,f(t,\xx,\vv)d\vv,
\end{equation*}
where $q$ is the elementary charge. For the Vlasov-Poisson model 
\begin{equation}
\label{poisson}
\mathbf{F}(t,\xx,\vv) = \frac{q}{m}\,\mathbf{E}(t,\xx),\quad \mathbf{E}(t,\xx) = -\nabla_{\xx} \phi(t,\xx), \quad - \Delta_\xx\phi = \frac{\rho}{\varepsilon_0},
\end{equation}
where $m$ represents the mass of one particle. On the other hand for the Vlasov-Maxwell model, we have
\begin{equation*}
\mathbf{F}(t,\xx,\vv) = \frac{q}{m}\,(\mathbf{E}(t,\xx) + \vv \wedge \mathbf{B}(t,\xx)\,),
\end{equation*}
and $\mathbf{E}$, $\mathbf{B}$ are solutions of the Maxwell equations
\begin{equation*}
\left\{
\begin{array}{l}
\displaystyle{\frac{\partial \mathbf{E}}{\partial t} \,-\, c^2  \, \nabla \times \bB  \,=\, -\frac{\jj}{\varepsilon_0},}
\\
\,
\\
\displaystyle{\frac{\partial \mathbf{B}}{\partial t} \,+\, \nabla \times \mathbf{E} \,=\, 0,}
\\
\,
\\
\displaystyle{\nabla \cdot \mathbf{E} = \frac{\rho}{\varepsilon_0}, \quad \nabla \cdot \mathbf{B} = 0,}
\end{array}\right.
\end{equation*}
with the compatibility condition
\begin{equation*}
\frac{\partial \rho}{\partial t} + \Div_\xx \jj = 0,
\end{equation*}
which is verified by 
the solutions of the Vlasov equation.

Here we will consider an intermediate model where the magnetic field
is given,  
$$
{\bf B}(t,\xx) \,\,=\,\, \frac{1}{\eps} \,{\bf B}_{\rm ext}(t,\xx_\perp), 
$$
with  $\varepsilon >0$ and we focus on the long time behavior of the plasma in the orthogonal plane to the external
magnetic field,  that is the two dimensional Vlasov-Poisson
system with an external strong magnetic field
\begin{equation}
\label{eq:vlasov2d}
\left\{
\begin{array}{l} 
\displaystyle{\varepsilon\frac{\partial f}{\partial t}\,+\,\mathbf{v}_\perp\cdot\Gradxp f \,+\,\left(
  \mathbf{E}(t,\xx_\perp) \,+\, \frac{1}{\eps}\vv_\perp \wedge {\bf B}_{\rm ext}(t,\xx_\perp) \right)\cdot\Gradvp f
\,=\, 0, \quad (\xx_\perp,\vv_\perp)\in\RR^4},
\\
\,
\\
\displaystyle{E = - \Gradxp \phi, \quad -\Delta_{\xx_\perp} \phi = \rho,  \quad \xx_\perp\in\RR^2}.
\end{array}\right.
\end{equation}
Here, for simplicity we 
set all physical constants 
to one and consider that $\varepsilon>0$ is
a small parameter related to the ratio between the reciprocal Larmor
frequency and the advection time scale. The term $\varepsilon$ in
front of the time derivative of $f$ 
stands for the fact 
that we want to approximate
the solution for large time.

We want to  construct numerical solutions to the Vlasov-Poisson system (\ref{eq:vlasov2d}) by particle methods (see \cite{birdsall}), which consist in
approximating the distribution function by a finite number of
macro-particles. The trajectories of these particles are computed from
the characteristic curves corresponding to the Vlasov equation
\begin{equation}
\label{traj:00}
\left\{
\begin{array}{l}
\ds{\varepsilon\frac{d\XX}{dt} = \VV,} 
\\
\,
\\
\ds{\varepsilon\frac{d\VV}{dt} = \frac{1}{\varepsilon}\VV\wedge {\bf
    B}_{\rm ext}(t,\XX)    \,+\, {\bf E}(t,\XX), }
\\
\,
\\
\XX(t^0) = \xx^0_\perp, \, \VV(t^0) = \vv^0_\perp,
\end{array}\right.
\end{equation}
where the electric field is computed from a discretization of the Poisson equation
in (\ref{eq:vlasov2d}) on a mesh of the physical space.
  
The main purpose of this work is the construction of efficient
numerical methods for stiff transport equations of type (\ref{eq:vlasov2d}) in the
limit $\varepsilon\rightarrow 0$.  Indeed,  setting 
$$
\bZ \,=\, \frac{\VV}{\eps} - \frac{\bE\wedge\bB_{\rm
    ext}}{\|\bB_{\rm ext}\|^2}, 
$$
the  system (\ref{traj:00}) can be re-written for $(\XX, \bZ)$ as
\begin{equation}
\label{traj:01}
\left\{
\begin{array}{l}
\ds{\frac{d\XX}{dt} \,=\,  \frac{\bE \wedge\bB_{\rm
      ext}}{\| \bB_{\rm ext}\|^2}  \,+\,\bZ,} 
\\
\,
\\
\ds{\frac{d\bZ}{dt} \,=\, -\frac{1}{\eps^2}\, {\bf B}_{\rm ext} \wedge
  \bZ \,-\, \frac{d}{dt}\left(\frac{\bE\wedge\bB_{\rm
    ext}}{\|\bB_{\rm ext}\|^2}\right), }
\\
\,
\\
\XX(t^0) = \xx^0_\perp, \quad \bZ(t^0) = \ds\frac{1}{\eps}\vv^0_\perp - \frac{\bE\wedge\bB_{\rm
    ext}}{\|\bB_{\rm ext}\|^2}(t^0,\xx_\perp^0).
\end{array}\right.
\end{equation}
Therefore, we denote by $(\bX^\eps,\bZ^\eps)$ the solution to
(\ref{traj:01}), and under some classical smoothness assumptions on the
electromagnetic fields $(\bE, \bB_{\rm ext})$,  
it is well-known at least when $\vv^0_\perp=0$ or $\bB_{\rm ext}$ is homogeneous 
that  $(\bZ^\eps)_{\eps>0}$ converges 
weakly 
to zero when $\eps\rightarrow 0$, and 
$\XX^\eps \rightharpoonup \bY$, 
where $\bY$ corresponds to the guiding center
approximation
\begin{equation}
\label{traj:limit}
\left\{
\begin{array}{l}
\ds{\frac{d\bY}{dt} \,=\,  \frac{\bE \wedge\bB_{\rm
      ext}}{\| \bB_{\rm ext}\|^2}(t,\bY),} 
\\
\,
\\
\bY(t^0) = \xx^0_\perp.
\end{array}\right.
\end{equation}
Here, we are of course interested in the behavior of the sequence solution $(f^\eps)_{\eps>0}$ to the
Vlasov-Poisson system (\ref{eq:vlasov2d}) when
$\varepsilon\rightarrow 0$, which corresponds to the gyro-kinetic
approximation of the Vlasov-Poissons system.  Following the work of L.
Saint-Raymond \cite{SR:00},  it can be proved 
--- at least when $\bB_{\rm ext}$ is homogeneous --- 
that the charge density
$(\rho^\eps)_{\eps>0}$ converges to the  solution to the guiding center approximation
\begin{equation}
\left\{
\begin{array}{l}
 \displaystyle{\frac{\partial \rho}{\partial t}+\mathbf{U}\cdot\nabla\rho=0},\\[3.5mm]
 -\Delta_{\xx_\perp}\phi=\rho,
\end{array}
\right.
   \label{eq:gc}
  \end{equation}
where the velocity $\mathbf{U}$ is
$$
{\bf U} = \frac{\bE \wedge \bB_{\rm ext}}{\|\bB_{\rm ext} \|^2}, \quad
\bE = -\Gradxp \phi.
$$  
We observe that the limit system (\ref{traj:limit}) corresponds to the
characteristic curves to the limit equation (\ref{eq:gc}).

We seek a method that is able to capture these properties, while the numerical
parameters may be kept independent of the stiffness degree of these
scales. This concept is known and widely studied for dissipative
systems in the framework of asymptotic preserving schemes
\cite{jin:99, klar:98}.  Contrary to collisional kinetic equations in hydrodynamic or
diffusion asymptotic, collisionless equations like the Vlasov-Poisson
system (\ref{eq:vlasov2d}) involve time oscillations. In this context,
the situation is more complicated than the one encountered in
collisional regimes since we
cannot expect any dissipative phenomenon. Therefore, the notion of
two-scale convergence has been introduced both at the theoretical and
numerical level \cite{CFHM:15, FSS:09, FS:00}  in order to derive asymptotic
models. However, these asymptotic models, obtained after removing the
fast scales,  are valid only when $\varepsilon$ is
small.  We refer to E. Fr\'enod and E. Sonnendr\"ucker \cite{FS:00}, and
F. Golse and L. Saint-Raymond \cite{GSR:99,SR:02} for a theoretical point of
view on these questions, and E. Fr\'enod, F. Salvarani and E. Sonnendr\"ucker \cite{FSS:09} for numerical applications of such techniques.

Another approach is to combine both disparate scales into one and
single model.  Such a decomposition can be done using  a micro-macro
approach (see \cite{CFHM:15} and the references therein). Such a model
may be used when the small parameter of the equation is not everywhere small. Hence, a scheme for a micro-macro model can switch
from one regime to another without any treatment of the transition
between the different regimes.  A different method consists in separating fast and slow time scales
when such a structure can be identified \cite{bibCLM} or
\cite{FHLS:15}. 

Theses techniques work well when the magnetic field is uniform since
fast scales can be computed from a formal
asymptotic analysis, but for more complicated problems, that is, when
the external magnetic field depends on time and position $\xx$, the
generalization of this approach is an open problem.  

In this paper, we propose an alternative to such methods allowing to
make direct simulations of systems (\ref{eq:vlasov2d}) with large time steps with respect to $\varepsilon$. We develop numerical schemes that are able to
deal with a wide range of values for $\varepsilon$,  so-called Asymptotic Preserving (AP) class
\cite{klar:98, jin:99}, such schemes are consistent with the kinetic model for all positive value
of $\varepsilon$, and degenerate into consistent schemes with the asymptotic model
when $\varepsilon\rightarrow0$.

Before 
presenting 
our time discretization technique, let us first briefly review the basic tools  of particle-in-cell methods which are widely used for plasma physics simulations \cite{birdsall}.

\section{A brief review of particle methods}
\label{sec:2}

The numerical resolution of the Vlasov equation and related models is usually performed by
Particle-In-Cell (PIC) methods which approximate the plasma by a
finite number of particles. Trajectories of these particles are
computed from characteristic curves (\ref{traj:00}) corresponding to
the the Vlasov equation (\ref{eq:vlasov2d}), whereas self-consistent fields are computed on a mesh of the physical
space.

This method yields satisfying results with a relatively small
number of particles but it is sometimes subject to fluctuations, due to the
numerical noise, which are difficult to control. To improve the
accuracy, direct numerical simulation
techniques have been developed.  The Vlasov equation is discretized
in phase space  using either semi-Lagrangian \cite{FSB, bibFS, bibQS11, bibSR}, finite difference
\cite{filbet-yang} or discontinuous Galerkin \cite{bianca,heath12}
schemes. But these direct methods are very costly, hence several
variants of particle methods  have been developed over the past
decades. In the Complex Particle Kinetic  scheme introduced by Bateson and Hewett \cite{ref2,
  ref18}, particles have a Gaussian shape that is transformed by the
local shearing of the flow. Moreover they can be fragmented to probe
for emerging features, and merged where fine particles are no longer
needed. In the Cloud in Mesh (CM) scheme of Alard and Colombi \cite{ref1} particles also have Gaussian shapes, and they are
deformed by local linearization of the force field. More recently in
\cite{ref7}, the authors proposed a Linearly-Transformed
Particle-In-Cell method, that employs linear deformations of the
particles.  
 
Here we focus on the time discretization technique, hence  we will only consider standard particle method even if
our approach is completely independant on the choice of the particle
method.

The particles method consists in approximating the initial condition $f_0$ in (\ref{eq:vlasov2d}) by the following Dirac mass sum
$$
f_N^0(\xx,\vv) \,:=\; \sum_{1\leq k\leq N } \omega_k \;\delta(\xx-\xx^0_k) \;\delta(\vv-\vv^0_k)\,,
$$
where $(\xx^0_{k},\vv_{k}^0)_{1 \leq k \leq N}$ is a beam of $N$ particles distributed in the four dimensional phase space according to the density function $f_0$. Afterwards, one
approximates the solution of (\ref{eq:vlasov2d}), by
$$
f_N(t,\xx,\vv) \,:=\; \sum_{1\leq k\leq N } \omega_k \;\delta(\xx-\XX_k(t)) \;\delta(\vv-\VV_k(t))\,,
$$
where $(\XX_k,\VV_k)_{1\leq k\leq N}$ is the position in phase space
of particle $k$ moving along the characteristic curves (\ref{traj:00})
with the initial data $(\xx_k^0, \vv_k^0)$, for $1\leq k \leq N$.
   
However when the Vlasov equation is coupled with the Poisson equation
for the computation of the electric field, the Dirac mass has to be
replaced by a smooth function $\varphi_\alpha$
$$
f_{N,\alpha}^0(\xx,\vv) \,:=\; \sum_{1\leq k \leq N} \omega_k
\;\vp_\alpha (\xx-\xx^0_k) \;\vp_\alpha (\vv-\vv^0_k)\,, 
$$
where $\vp_\varepsilon = \varepsilon^{-d} \vp(\cdot / \varepsilon)$ is a particle shape function with radius proportional to $\varepsilon$, 
usually seen as a smooth approximation of the Dirac measure obtained
by scaling a compactly supported ``cut-off'' function $\vp$ for which common choices include B-splines and smoothing kernels with vanishing moments, 
see {\it e.g.} \cite{Koumoutsakos.1997.jcp,Cottet.Koumoutsakos.2000.cup}.

Particle centers are then pushed forward at each time $t$ by following
a numerical approximation of the flow (\ref{traj:00}),  leading to
$$
f_{N,\alpha}(t,\xx,\vv) \,:=\, \sum_{1\leq k \leq N} \omega_k\, \vp_\alpha\left(\xx-\XX_k(t)\right)\, \vp_\alpha\left(\vv-\VV_k(t)\right).
$$
In the classical error analysis \cite{Beale.Majda.1982b.mcomp,Raviart.1985.lnm}, the above process is
seen as 
\begin{itemize}
\item An approximation (in the distribution sense) of the initial data by a collection of weighted Dirac measures\;;
\item The exact transport of the Dirac particles along the flow\,; 
\item The smoothing of the resulting distribution 
$$
\sum_{1\leq k\leq N} \omega_k \,\delta(.-\XX_k(t))
\,\delta(.-\VV_k(t))
$$ 
with the convolution kernel $\vp_\varepsilon$.
\end{itemize}

The classical error estimate reads then as follows \cite{Cohen.Perthame.2000.simath}: 
\begin{proposition}
\label{lmm:0}
Consider the Vlasov equation with a given electromagnetic field
$(\bE,\bB_{\rm ext})$ and a smooth initial datum $f^0\in C_c^{s}(\RR^d)$,
with $s\geq1$.  

If for some prescribed integers $m>0$ and $r>0$, the cut-off $\vp\geq 0$ has
$m$-th order smoothness and satisfies 
a
moment condition of order
$r$, namely, 
$$
\int_{\RR^d}  \vp(\yy) \,d\yy \,=\, 1, \quad\int_{\RR^d}  |\yy|^r \,\vp(y) \,d \yy \,<\, \infty,
$$ 
and
$$
\int_{\RR^d} y_1^{s_1} \ldots y_d^{s_d} \,\vp(\yy) \,d\yy \,=\, 0, 
\quad \text{ for } ~ \s=(s_1,\ldots,s_d) \in \NN^d ~ \text{ with } ~  1 \le s_1 + \cdots + s_d \le r-1.
$$
Then there exists a constant $C$ independent of $f_0$, $N$ or
$\alpha$, such that  we have  for all $1\leq p \leq +\infty$, 
$$
\| f(t)-f_{N,\alpha}(t) \|_{L^p} \rightarrow 0,
$$ 
when $N\rightarrow \infty$ and $\alpha\rightarrow 0$  where the ratio $N^{1/d} \alpha\ll 1$. 
\end{proposition}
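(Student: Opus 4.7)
My plan is to decompose the error in the standard way used by Raviart and Cohen–Perthame and to exploit crucially the fact that here the electromagnetic field is given, so the characteristic flow is exact and independent of $N$, $\alpha$. Write $\Phi_t:(\yy,\ww)\mapsto(\XX_t(\yy,\ww),\VV_t(\yy,\ww))$ for the flow of (\ref{traj:00}). Since the vector field is divergence-free in $(\xx,\vv)$ (the Lorentz force is perpendicular to $\vv$), $\Phi_t$ preserves phase-space volume, so $f(t,\cdot)=f^0\circ\Phi_t^{-1}$ and a change of variables gives
\begin{equation*}
(\vp_\alpha\otimes\vp_\alpha)\ast f(t)(\xx,\vv)
=\int\vp_\alpha(\xx-\XX_t(\yy,\ww))\,\vp_\alpha(\vv-\VV_t(\yy,\ww))\,f^0(\yy,\ww)\,d\yy\,d\ww.
\end{equation*}
Comparing with $f_{N,\alpha}(t)$ reveals that the latter is nothing but a quadrature, with nodes $(\xx_k^0,\vv_k^0)$ and weights $\omega_k$, applied to the same integral. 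I then split
\begin{equation*}
f(t)-f_{N,\alpha}(t)=\bigl[f(t)-(\vp_\alpha\otimes\vp_\alpha)\ast f(t)\bigr]+\bigl[(\vp_\alpha\otimes\vp_\alpha)\ast f(t)-f_{N,\alpha}(t)\bigr]
\end{equation*}
and treat the two pieces separately.

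For the mollification error, I would Taylor-expand $f(t,\xx-\alpha\yy,\vv-\alpha\ww)$ to order $r$ around $(\xx,\vv)$; the moment conditions of order $r$ on $\vp$ kill every intermediate term, leaving a remainder of size $O(\alpha^{\min(r,s)})$ in $L^p$, controlled by the $C^s$ norm of $f^0$, which is transported along $\Phi_t$ and hence remains bounded on any fixed time interval (with a constant depending on the fields through Gronwall).

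The main work is the quadrature error. Setting $g_{\xx,\vv,t}(\yy,\ww):=\vp_\alpha(\xx-\XX_t(\yy,\ww))\,\vp_\alpha(\vv-\VV_t(\yy,\ww))\,f^0(\yy,\ww)$, I view the problem as approximating $\int g_{\xx,\vv,t}\,d\yy\,d\ww$ by $\sum_k\omega_k\,g_{\xx,\vv,t}(\xx_k^0,\vv_k^0)/f^0(\xx_k^0,\vv_k^0)\cdot f^0(\xx_k^0,\vv_k^0)$. Assuming the particles are placed on a regular mesh of step $h\sim N^{-1/d}$ with weights equal to the cell volume times $f^0$ at the node (the most standard choice), the usual quadrature error bound on a smooth integrand yields, for any integer $m'\leq m$,
\begin{equation*}
\Bigl|(\vp_\alpha\otimes\vp_\alpha)\ast f(t)(\xx,\vv)-f_{N,\alpha}(t)(\xx,\vv)\Bigr|\leq C\,h^{m'}\,\|g_{\xx,\vv,t}\|_{C^{m'}}\,.
\end{equation*}
Because $\vp_\alpha$ brings a factor $\alpha^{-d-m'}$ and each derivative of $\XX_t,\VV_t$ is bounded on the support by Gronwall applied to the variational equation of (\ref{traj:00}) with smooth fields, I obtain $\|g_{\xx,\vv,t}\|_{C^{m'}}\leq C_t\,\alpha^{-d-m'}$, whence after integrating in $(\xx,\vv)$ against the compactly supported image of the initial support,
\begin{equation*}
\|(\vp_\alpha\otimes\vp_\alpha)\ast f(t)-f_{N,\alpha}(t)\|_{L^p}\leq C_t\,\Bigl(\frac{h}{\alpha}\Bigr)^{m'}.
\end{equation*}
Adding both estimates gives $\|f(t)-f_{N,\alpha}(t)\|_{L^p}\leq C_t\bigl(\alpha^{\min(r,s)}+(h/\alpha)^{m'}\bigr)$, which tends to zero under $h=N^{-1/d}\ll\alpha\to 0$, as claimed.

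The delicate step is the quadrature estimate: one must track how many derivatives fall on $\vp_\alpha$ (each costing a power of $\alpha^{-1}$) versus on the flow (bounded uniformly in $\alpha$), and check that the argument survives for initial samplings that are only asymptotically uniform — a slight generalization of the grid case that follows by comparing with a Riemann sum on a cell decomposition subordinate to the initial support. Everything else is a direct application of Taylor expansion and Gronwall's lemma for the smooth flow of (\ref{traj:00}).
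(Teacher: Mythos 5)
The paper itself offers no proof of this proposition: it is quoted as the classical error estimate and attributed to the references of Raviart and Cohen--Perthame, whose analysis follows exactly the decomposition you use (smoothing/mollification error controlled by the moment conditions of $\vp$, plus a quadrature error for the exactly transported weighted Dirac sum, with derivatives of $\vp_\alpha$ costing powers of $\alpha^{-1}$), in line with the three-step sketch given just before the statement. Your argument is therefore essentially the same as the one the paper invokes; the only point to keep in mind is that the deterministic quadrature bound requires a mesh-type (not random) initialization of the particles, which is indeed the setting of the cited classical analysis.
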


Note that following \cite{Cohen.Perthame.2000.simath}, it is also possible to get explicit order of
convergence for the linear Vlasov equation.  Let us also mention related 
papers where the convergence of a numerical scheme for the Vlasov-Poisson 
system is investigated. G.-H. Cottet and P.-A. Raviart \cite{cottet_1} present a precise
mathematical analysis of the particle method for solving the one-dimensional 
Vlasov--Poisson system. We also mention the papers of
S. Wollman and E. Ozizmir \cite{wollman1_1} and S. Wollman \cite{wollman2_1} on the topic. 
K. Ganguly and H.D. Victory give a convergence result for the Vlasov-Maxwell
system \cite{ganguly_1}.

The rest of the paper is organized as follows. In Section \ref{sec:3}
we present several time discretization techniques based on high-order
semi-implicit schemes \cite{BFR:15} for  the Vlasov-Poisson system with a
strong external magnetic field, and we prove uniform consistency of
the schemes in the limit $\eps\rightarrow 0$ with preservation of the
order of accuracy (from first to third order accuracy). In Section \ref{sec:4} we perform a rigorous analysis of the
first order scheme for smooth electromagnetic fields.
 
Section \ref{sec:5} is then devoted to numerical simulations for one
single particle motion and for the Vlasov-Poisson model for various
asymptotics $\eps \approx 1$ and $\eps\ll 1$, which illustrate the
advantage of high order schemes.

\section{A particle method for Vlasov-Poisson
  system with
  a strong magnetic
  field}
\setcounter{equation}{0}
\label{sec:3}
Let us now consider the system (\ref{eq:vlasov2d}) and apply a
particle method, where the key issue is to design a uniformly stable
scheme with respect to the parameter $\eps>0$, which is related to the magnitude
of the external magnetic field. Assume that at time $t^n=n\,\Delta t$,
the set of particles are located in $(\xx_k^n, \vv_k^n)_{1\leq k\leq
  N}$, we want to solve the following system on the time interval $[t^n, t^{n+1}]$,
\begin{equation}
\label{traj:bis}
\left\{
\begin{array}{l}
\ds\eps\frac{d\XX_k}{dt} = \VV_k, 
\\
\,
\\
\ds\eps\frac{d\VV_k}{dt} = \frac{1}{\eps} \VV_k \wedge {\bf B}_{\rm ext}(t,\XX_k)  + {\bf E}(t,\XX_k), 
\\
\,
\\
\XX(t^n) = \xx^n_k, \, \VV(t^n) = \vv^n_k,
\end{array}\right.
\end{equation}
where the electric field is computed from a discretization of the Poisson equation
(\ref{eq:vlasov2d}) on a mesh of the physical space.
  
The numerical scheme that we describe here is proposed in the
framework of Particle-In-Cell method, where the solution $f$  is
discretized as follows
$$
f_{N,\alpha}^{n+1}(\xx,\vv) := \sum_{1\leq k \leq N} \omega_k\, \vp_\alpha
(\xx-\xx^{n+1}_k)\, \vp_\alpha(\vv-\vv^{n+1}_k), 
$$
where  $(\xx_k^{n+1},\vv_k^{n+1})$ represents an approximation of
the solution $\XX_k(t^{n+1})$ and $\VV_k(t^{n+1})$ to (\ref{traj:bis}).

When the Vlasov equation (\ref{eq:vlasov2d}) is coupled with the
Poisson equation (\ref{poisson}), the electric field is computed in a
macro-particle position $\xx^{n+1}_\kk$ at time $t^{n+1}$ as follows
  
\begin{itemize}
\item Compute the density $\rho$
$$
\rho_{h,\varepsilon}^{n}(\xx) = \sum_{\kk\in\ZZ^d} w_\kk \,\;\vp_\varepsilon (\xx-\xx^{n}_\kk).
$$
\item Solve a discrete approximation to (\ref{poisson}) 
$$
-\Delta_{h} \phi^n (\xx) = \rho_{h,\varepsilon}^{n}(\xx).
$$
\item Interpolate the electric field  with the same order of accuracy
  on the points $(\xx_\kk^{n})_{\kk\in\ZZ^d}$.
\end{itemize}

To discretize the system (\ref{traj:bis}),  we apply the strategy developed in
\cite{BFR:15} based on semi-implicit solver for stiff problems. In the
rest of this section, we propose several numerical 
schemes 
to the system (\ref{traj:bis}) for which the index $k\in\{1,\ldots,N\}$ will
be omitted.   

\subsection{A first order semi-implicit scheme}
We start with the simplest semi-implicit scheme for (\ref{traj:bis}), which is a
combination of backward and forward Euler scheme.  It gives for a fixed time step $\Delta t>0$ and a given
electric field ${\bf E}$ and an external magnetic field $\bB_{\rm ext}$,
\begin{equation}
\label{scheme:0}
\left\{
\begin{array}{l}
\ds\frac{\xx^{n+1} - \xx^n }{\Delta t} \,\,=\, \frac{\vv^{n+1}}{\eps},
\\
\,
\\
\ds\frac{\vv^{n+1} - \vv^n }{\Delta t} \,\,=\, \frac{1}{\eps}\left(
\frac{\vv^{n+1}}{\eps} \wedge \bB_{\rm ext} (t^n, \xx^n) + {\bE(t^n,\xx^n)}\right).
\end{array}\right.
\end{equation}
Note that only the second equation on $\vv^{n+1}$ is fully implicit
and 
requires 
the inversion of a linear operator. Then, from
$\vv^{n+1}$ the first equation gives the value for the position $\xx^{n+1}$.

\begin{proposition}[Consistency in the
  limit $\eps\rightarrow 0$ for a fixed $\Delta t$]
\label{prop:1}
Let us consider a time step $\Delta t>0$, a final time $T>0$  and
set $N_T=[T/\Delta t]$. Assume that the sequence $(\xx^n_\eps,\vv^n_\eps)_{0\leq
  n\leq N_T}$ given by
(\ref{scheme:0}) is such that for all $1\leq n\leq N_T$,
$\left(\xx^n_\eps,\eps\vv^n_\eps\right)_{\eps>0}$ is uniformly
bounded with respect to $\eps>0$ and $\left(\xx^0_\eps,\eps\vv^0_\eps\right)_{\eps>0}$ converges in the limit $\eps\rightarrow 0$ to some $(\yy^0,0)$. 
Then, for $1\leq n\leq N_T$, $\xx^n_\eps\rightarrow \yy^n$, as $\eps\rightarrow 0$ and the
limit $(\yy^n)_{1\leq n\leq N_T}$ is a consistent first order approximation with respect to $\Delta t$ of the
guiding center equation provided by the scheme
\begin{equation}
\label{sch:y0}
\frac{\yy^{n+1} - \yy^n}{\Delta t} = \bE(t^n,\yy^n)\wedge \frac{\bB_{\rm ext}(t^n,
\yy^n)}{\|\bB\|^2}.
\end{equation}
\end{proposition}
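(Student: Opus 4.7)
The plan is to solve explicitly for $\vv^{n+1}$ in the second equation of (\ref{scheme:0}) and then to propagate the limit by induction on $n$. In the two-dimensional setting, the operator $\vv\mapsto\vv\wedge\bB_{\rm ext}$ acts as $\|\bB_{\rm ext}\|\,J$ with $J$ the $90^\circ$ rotation in $\RR^2$, so that the velocity update reads
\[
\left(\bI \,-\, \frac{\Delta t}{\eps^{2}}\,\bB_{\rm ext}(t^n,\xx^n)\wedge\,\cdot\,\right)\vv^{n+1}\;=\;\vv^n \,+\, \frac{\Delta t}{\eps}\,\bE(t^n,\xx^n)\,,
\]
with explicit inverse $(\bI + \alpha J)/(1+\alpha^{2})$, where $\alpha := \Delta t\,\|\bB_{\rm ext}(t^n,\xx^n)\|/\eps^{2}$. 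I would then expand $\vv^{n+1}/\eps$ by applying this inverse term by term. The key point is that the potentially large contribution $(\Delta t/\eps^{2})\,\bE$ is weighted by $\alpha/(1+\alpha^{2}) \sim 1/\alpha = \eps^{2}/(\Delta t\,\|\bB_{\rm ext}\|)$, and the two $\eps$-scales balance to produce exactly $\bE\wedge\bB_{\rm ext}/\|\bB_{\rm ext}\|^{2}$ at leading order, while the two remaining pieces involving $\vv^n/\eps$ and the diagonal block acting on $\bE$ are controlled by the uniform bound on $\eps\vv^n_\eps$ via the same scaling and vanish as $\eps \to 0$.

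I would then set up an induction whose hypothesis is $\xx^n_\eps\to\yy^n$ and $\eps\vv^n_\eps\to 0$; the base case $n=0$ is part of the assumption. For the induction step, continuity of $\bE$ and $\bB_{\rm ext}$ together with the expansion above give
\[
\frac{\vv^{n+1}_\eps}{\eps}\;\longrightarrow\;\bE(t^n,\yy^n)\wedge\frac{\bB_{\rm ext}(t^n,\yy^n)}{\|\bB_{\rm ext}(t^n,\yy^n)\|^{2}}
\]
in the limit $\eps\to 0$. In particular $\vv^{n+1}_\eps/\eps$ is bounded, so $\eps\vv^{n+1}_\eps = \eps^{2}(\vv^{n+1}_\eps/\eps)\to 0$, which closes the induction on the velocity variable. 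Plugging this into the first equation of (\ref{scheme:0}) gives $\xx^{n+1}_\eps = \xx^n_\eps + \Delta t\,\vv^{n+1}_\eps/\eps \to \yy^{n+1}$, with $\yy^{n+1}$ defined by (\ref{sch:y0}).

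Finally, consistency of (\ref{sch:y0}) with the guiding-centre ODE (\ref{traj:limit}) follows immediately, since it is just the standard forward Euler discretisation of that equation and the drift field $\bE\wedge\bB_{\rm ext}/\|\bB_{\rm ext}\|^{2}$ is smooth by assumption. The main delicate point is the bookkeeping of powers of $\eps$ in the inverse of the rotation operator: one has to verify that the apparently singular contribution $(\Delta t/\eps^{2})\,J\bE$ is exactly offset by the small factor $\alpha/(1+\alpha^{2})$ of the inverse, whereas the terms involving $\vv^n/\eps$ --- a priori of order $1/\eps^{2}$ --- are tamed by the uniform bound on $\eps\vv^n_\eps$ together with the additional $1/\alpha$ or $1/\alpha^{2}$ prefactors. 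Once the sizes of the four contributions in the inverse have been tallied, the induction mechanism is routine.
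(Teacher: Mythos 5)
Your argument is correct, but it takes a different route from the paper. The paper's proof never inverts the velocity update explicitly: it uses the assumed uniform bounds to extract a converging subsequence of $(\xx^n_\eps)_{\eps>0}$, rewrites the second equation of (\ref{scheme:0}) as $\eps^{2}\,\frac{\vv^{n+1}_\eps-\vv^n_\eps}{\eps\Delta t}=\frac{\vv^{n+1}_\eps}{\eps}\wedge\bB_{\rm ext}(t^n,\xx^n_\eps)+\bE(t^n,\xx^n_\eps)$, deduces from the bound on $\eps\vv^n_\eps$ that $\eps^{-1}\vv^{n+1}_\eps$ is bounded and that $\eps\vv^n_\eps\to0$, passes to the limit to identify $\eps^{-1}\vv^{n+1}_\eps\to\bE\wedge\bB_{\rm ext}/\|\bB_{\rm ext}\|^{2}$, and finally upgrades subsequential convergence to convergence of the whole family by uniqueness of the limit point. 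You instead invert the two-dimensional rotation operator, $(\bI-\alpha J)^{-1}=(\bI+\alpha J)/(1+\alpha^{2})$ with $\alpha=\Delta t\,\|\bB_{\rm ext}\|/\eps^{2}$, and run a forward induction in $n$; this is exactly the resolvent identity the paper reserves for the quantitative analysis of Theorem~\ref{th:1} in Section~\ref{sec:4}. What your route buys: no compactness or subsequence extraction is needed, the uniform bounds on $\xx^n_\eps$ and $\eps\vv^n_\eps$ for $n\geq1$ come out of the induction rather than being used as hypotheses, and the explicit scaling bookkeeping is the natural first step toward error estimates in $\Delta t/\lambda$. What the paper's route buys is brevity and independence of the planar structure (it only uses that the limiting linear relation $\ww\wedge\bB_{\rm ext}=-\bE$ determines $\ww$). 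One small correction to your wording: the piece $\frac{\alpha}{1+\alpha^{2}}\,J\,\frac{\vv^n_\eps}{\eps}$ is of size $\frac{\|\eps\vv^n_\eps\|}{\Delta t\,\|\bB_{\rm ext}\|}$, so a mere uniform bound on $\eps\vv^n_\eps$ only makes it $O(1)$; its vanishing requires $\eps\vv^n_\eps\to0$, which your induction hypothesis (and, at $n=0$, the assumption on the initial data) does supply, so the proof closes, but the justification should invoke that convergence rather than the uniform bound.
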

\begin{proof}
For all $1\leq n\leq N_T$, we consider $(\xx^n_\eps,\vv^n_\eps)$ the
solution to (\ref{scheme:0}) now labeled with respect to $\eps>0$.
Since, the sequence $(\xx^n_\eps)_{\eps>0}$ is uniformly bounded with respect to
$\eps>0$, we can extract a subsequence still abusively labeled by $\eps$ and find some 
$(\yy^n)$ such that $\xx^n_\eps\rightarrow \yy^n$ as $\eps$ goes to zero.
Then, we observe that the second equation of (\ref{scheme:0}) can be written as
$$
\eps^2\frac{\vv^{n+1}_\eps - \vv^n_\eps }{\eps\Delta t} \,\,=\, \left(\frac{\vv^{n+1}_\eps}{\eps} \wedge \bB_{\rm ext}(t^n, \xx^n_\eps) + \bE(t^n,\xx^n_\eps)\right).
$$
and that, for any $0\leq n\leq N_T$, $\left(\eps\vv^n_\eps\right)_{\eps>0}$ is uniformly bounded. From this we conclude first that for any $1\leq n\leq N_T$, $\left(\eps^{-1}\vv^n_\eps\right)_{\eps>0}$ is uniformly bounded then that $\eps\vv^n_\eps\rightarrow0$ for any $0\leq n\leq N_T$. Therefore, taking the limit $\eps\rightarrow 0$, it yields that
for $0\leq n\leq N_T-1$,
$$
\eps^{-1}\,\vv^{n+1}_\eps \rightarrow \bE(t^n,\yy^n)\wedge \frac{\bB_{\rm ext}(t^n,
\yy^n)}{\|\bB_{\rm ext}\|^2}, \,\,{\rm when}\,\, \eps\rightarrow 0.
$$
Substituting the limit of $\eps^{-1}\,{\vv^{n+1}} $ in the first equation of
(\ref{scheme:0}) we prove that the limit $\yy^n$ satisfies (\ref{sch:y0}). Since the limit point $\yy_n$ is uniquely determined, actually all the sequence $(\xx^n_\eps)_{\eps>0}$ converges.
\end{proof}
\begin{remark}
The consistency provided by the latter result is far from being uniform with respect to the time step. However we do 
prove in the next section that the solution to (\ref{scheme:0}) is both uniformly
stable and  consistent with respect to $\Delta t$ and $\eps>0$. 
\end{remark}

 Of course, such a first order scheme is not 
accurate enough 
 to describe correctly the long time behavior of the numerical solution,
 but it has the advantage of the simplicity and we will prove in the
 next section that it is uniformly stable with respect to the
 parameter $\eps$ and the sequence 
$(\xx^n_\eps)$ converges to a consistent approximation 
 of the guiding center model when $\eps\rightarrow 0$.

Now, let us see how to generalize such 
an 
approach to second and third order schemes. 

\subsection{Second order semi-implicit Runge-Kutta schemes}
Now, we consider second order schemes with two stages. 
\subsubsection{A second order A-stable scheme}
A first example
of scheme satisfying the second order conditions is given by  a combination of Heun method (explicit part) and an
  $A$-stable second order singly diagonal  implicit Runge-Kutta SDIRK
  method (implicit part) \cite{hairer,BFR:15}. The first stage
  corresponds to
\begin{equation}
\label{scheme:2-1}
\left\{
\begin{array}{l}
\ds{\xx^{(1)} \,=\, \xx^n  \,+\, \frac{\Delta t}{2\eps}\,\vv^{(1)},}
\\
\,
\\
\ds{\vv^{(1)} \,=\, \vv^n  \,+\, \frac{\Delta t}{2\eps} \left[ \frac{\vv^{(1)}}{\eps} \wedge \bB_{\rm ext}(t^n, \xx^n) \,+\, \bE(t^n,\xx^n)\right].}
\end{array}\right.
\end{equation}

Then the second stage is given by
\begin{equation}
\label{scheme:2-2}
\left\{
\begin{array}{l}
\ds{\xx^{(2)} \,=\, \xx^n  \,+\, \frac{\Delta t}{2\eps}\,\vv^{(2)},}
\\
\,
\\
\ds{\vv^{(2)} \,=\, \vv^n  \,+\, \frac{\Delta t}{2\eps} \left[
  \frac{\vv^{(2)}}{\eps} \wedge \bB_{\rm ext}(t^{n+1}, 2\xx^{(1)} -\xx^n) \,+\, \bE(t^{n+1},2\xx^{(1)} -\xx^n)\right].}
\end{array}\right.
\end{equation}
Finally, the numerical solution at the new time step is
\begin{equation}
\label{scheme:2-3}
\left\{
\begin{array}{l}
\xx^{n+1} \,=\, \xx^{(1)}  + \xx^{(2)}  - \xx^n,
\\
\,
\\
\vv^{n+1} \,=\, \vv^{(1)}  + \vv^{(2)}  - \vv^n.
\end{array}\right.
\end{equation}

A similar numerical scheme has been proposed in the framework of
$\delta f$ simulation of the Vlasov-Poisson system  \cite{cheng2013}.
 
Under stability assumptions on the numerical solution to
(\ref{scheme:2-1})-(\ref{scheme:2-3}), we get the following
consistency result in the limit $\eps\rightarrow 0$.

\begin{proposition}[Consistency in the
  limit $\eps\rightarrow 0$ for a fixed $\Delta t$]
\label{prop:2}
Let us consider a time step $\Delta t>0$, a final time $T>0$  and
set $N_T=[T/\Delta t]$. Assume that the sequence $(\xx^n_\eps,\vv^n_\eps)_{0\leq
  n\leq N_T}$ given by (\ref{scheme:2-1})-(\ref{scheme:2-3}) is such that for all $1\leq n\leq N_T$,
$\left(\xx^n_\eps,\eps\vv^n_\eps\right)_{\eps>0}$ is uniformly
bounded with respect to $\eps>0$ and $\left(\xx^0_\eps,\eps\vv^0_\eps\right)_{\eps>0}$ converges in the limit $\eps\rightarrow 0$ to some $(\yy^0,0)$. 
Then, for $1\leq n\leq N_T$, $\xx^n_\eps\rightarrow \yy^n$, as $\eps\rightarrow 0$ and the limit $(\yy^n)_{n\geq 1}$ is a consistent and second
order approximation with respect to $\Delta t$ of the
guiding center equation given by the scheme
\begin{equation}
\label{sch:y1}
\left\{
\begin{array}{l}
\ds{\yy^{(1)} = \yy^n \,+\, \frac{\Delta t}{2} \left(\bE(t^n,\yy^n)\wedge \frac{\bB_{\rm ext}(t^n,
\yy^n)}{\|\bB_{\rm ext}\|^2}\right)},
\\
\,
\\
\ds{\yy^{(2)} = \yy^n \,+\, \frac{\Delta t}{2}\left(\bE(t^{n+1},2\yy^{(1)}-\yy^n)\wedge \frac{\bB_{\rm ext}(t^{n+1},
2\yy^{(1)}-\yy^n)}{\|\bB_{\rm ext}\|^2}\right)},
\end{array}\right.
\end{equation}
and $\yy^{n+1} = \yy^{(1)} + \yy^{(2)} - \yy^{n}$.
\end{proposition}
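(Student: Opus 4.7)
The plan is to mimic the argument of Proposition \ref{prop:1} stage by stage, exploiting the fact that both (\ref{scheme:2-1}) and (\ref{scheme:2-2}) share the formal structure of the first order scheme (\ref{scheme:0}), only with step $\Delta t/2$ and with coefficients frozen respectively at $(t^n,\xx^n)$ and at $(t^{n+1},2\xx^{(1)}-\xx^n)$. From the assumed uniform boundedness of $(\xx^n_\eps,\eps\vv^n_\eps)$ I extract a subsequence converging to some $(\yy^n,0)$. Rewriting (\ref{scheme:2-1}) as
$$\eps\,\frac{\vv^{(1)}_\eps-\vv^n_\eps}{\Delta t/2}\,=\,\frac{\vv^{(1)}_\eps}{\eps}\wedge\bB_{\rm ext}(t^n,\xx^n_\eps)\,+\,\bE(t^n,\xx^n_\eps),$$
and solving explicitly for $\vv^{(1)}_\eps$ (the $2\times 2$ linear operator $I-(\Delta t/(2\eps^2))(\,\cdot\,)\wedge\bB_{\rm ext}$ is invertible with inverse of size $O(\eps^2)$), one obtains $\vv^{(1)}_\eps=O(\eps)$. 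This control makes the left-hand side $O(\eps)$ and lets me pass to the limit to get
$$\eps^{-1}\vv^{(1)}_\eps \,\longrightarrow\, \bE(t^n,\yy^n)\wedge \frac{\bB_{\rm ext}(t^n,\yy^n)}{\|\bB_{\rm ext}\|^2},$$
which, substituted in the first line of (\ref{scheme:2-1}), yields $\xx^{(1)}_\eps\to\yy^{(1)}$ with $\yy^{(1)}$ given by the first equation of (\ref{sch:y1}).

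Second, I would repeat the same procedure on (\ref{scheme:2-2}): since $2\xx^{(1)}_\eps-\xx^n_\eps\to 2\yy^{(1)}-\yy^n$ from the previous step, continuity of $\bB_{\rm ext}$ and $\bE$ in space ensures that the frozen coefficients converge. The linear-inversion argument applied to the second stage similarly gives $\vv^{(2)}_\eps=O(\eps)$ and
$$\eps^{-1}\vv^{(2)}_\eps\,\longrightarrow\,\bE(t^{n+1},2\yy^{(1)}-\yy^n)\wedge\frac{\bB_{\rm ext}(t^{n+1},2\yy^{(1)}-\yy^n)}{\|\bB_{\rm ext}\|^2},$$
whence $\xx^{(2)}_\eps\to\yy^{(2)}$ with $\yy^{(2)}$ given by the second equation of (\ref{sch:y1}). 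Taking the limit in (\ref{scheme:2-3}) then produces $\yy^{n+1}=\yy^{(1)}+\yy^{(2)}-\yy^n$, and uniqueness of the limit point upgrades the extracted subsequence convergence to convergence of the full sequence, as in the proof of Proposition \ref{prop:1}.

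To conclude, I would identify the limit scheme with Heun's explicit method applied to the guiding-centre ODE $d\yy/dt=\bU(t,\yy)$, $\bU:=\bE\wedge\bB_{\rm ext}/\|\bB_{\rm ext}\|^2$. Indeed, setting $\tilde\yy:=2\yy^{(1)}-\yy^n=\yy^n+\Delta t\,\bU(t^n,\yy^n)$ and combining (\ref{sch:y1}) with $\yy^{n+1}=\yy^{(1)}+\yy^{(2)}-\yy^n$ gives
$$\yy^{n+1}\,=\,\yy^n\,+\,\frac{\Delta t}{2}\Big(\bU(t^n,\yy^n)\,+\,\bU(t^{n+1},\tilde\yy)\Big),$$
which is the classical Heun scheme, of local truncation error $\mathcal{O}(\Delta t^3)$ and hence of consistency order two by a standard Taylor expansion. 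The main obstacle is really the bookkeeping of the intermediate stages: only $(\xx^n_\eps,\eps\vv^n_\eps)$ is assumed bounded a priori, so the $O(\eps)$ bound on $\vv^{(1)}_\eps$ (and similarly on $\vv^{(2)}_\eps$) must be extracted directly from the schemes via the explicit inversion of the gyration operator rather than postulated, and the continuity of $\bB_{\rm ext}$, $\bE$ must be used to propagate limits through the nonlinear dependence on $2\xx^{(1)}-\xx^n$ appearing in the second stage.
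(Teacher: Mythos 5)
Your stage-by-stage plan, the inversion of the gyration operator $[\Id-\lambda R]$ with $\lambda=\Delta t/(2\eps^2)$ to get $\eps^{-1}\vv^{(1)}_\eps$ and $\eps^{-1}\vv^{(2)}_\eps$ bounded, and the final identification of \eqref{sch:y1} with Heun's method are all in the spirit of the paper's proof (the Heun remark even makes the ``second order'' claim more explicit than the paper does). But there is a genuine gap at the start: you ``extract a subsequence converging to some $(\yy^n,0)$'' and later assert that the left-hand side of the $\vv^{(1)}$-equation is $O(\eps)$. The hypothesis only gives that $\eps\vv^n_\eps$ is \emph{bounded} for $1\leq n\leq N_T$; convergence $\eps\vv^n_\eps\to0$ is assumed only at $n=0$, and nowhere do you prove it for $n\geq1$. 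It is exactly what is needed: carrying out your own inversion explicitly gives, up to $o(1)$ terms,
\begin{equation*}
\frac{\vv^{(1)}_\eps}{\eps}\;=\;\bE(t^n,\xx^n_\eps)\wedge\frac{\bB_{\rm ext}(t^n,\xx^n_\eps)}{\|\bB_{\rm ext}\|^2}\;+\;\frac{2}{\Delta t\,\|\bB_{\rm ext}\|^2}\,\bigl(\eps\vv^n_\eps\bigr)\wedge\bB_{\rm ext}(t^n,\xx^n_\eps),
\end{equation*}
so if $\eps\vv^n_\eps$ had a nonzero limit along your subsequence, the limit of $\eps^{-1}\vv^{(1)}_\eps$ would not be the guiding-center drift and the limit scheme would not be \eqref{sch:y1}.

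The missing step, which is precisely the one the paper supplies, is to use the recombination \eqref{scheme:2-3}: once the stage estimates give $\vv^{(1)}_\eps=O(\eps)$ and $\vv^{(2)}_\eps=O(\eps)$ (these indeed only require boundedness of $\eps\vv^n_\eps$, as in your inversion argument), the identity $\vv^{n+1}_\eps=\vv^{(1)}_\eps+\vv^{(2)}_\eps-\vv^n_\eps$ yields $\eps\vv^{n+1}_\eps=-\eps\vv^n_\eps+O(\eps^2)$, and an induction on $n$ starting from $\eps\vv^0_\eps\to0$ propagates $\eps\vv^n_\eps\to0$ to every $0\leq n\leq N_T$. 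Note that this cannot be bypassed by any dissipation argument: by Remark \ref{rem:nc} this A-stable scheme conserves $\|\vv^n\|$ when $\bE=0$ and $\bB_{\rm ext}=(0,0,1)$, so the smallness of $\eps\vv^n_\eps$ really comes from the initial datum plus this induction. With that step inserted before you pass to the limit in \eqref{scheme:2-1}, the remainder of your argument (continuity of $\bE$, $\bB_{\rm ext}$ to handle $2\xx^{(1)}_\eps-\xx^n_\eps$, limit in \eqref{scheme:2-3}, uniqueness of the limit point to upgrade subsequential to full convergence, and the Heun consistency computation) goes through and coincides with the paper's proof.
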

\begin{proof}
We follow the lines of the proof of Proposition~\ref{prop:1} and first choose a subsequence of $(\xx^n_\eps)$ converging to some $\yy^n$. The second equation of \eqref{scheme:2-1} implies that $\eps^{-1}\vv^{(1)}_\eps$ is bounded. From the first equation of \eqref{scheme:2-1} it follows that so is $\xx^{(1)}_\eps$. Then the second equation of \eqref{scheme:2-2} yields boundedness of $\eps^{-1}\vv^{(2)}_\eps$ and the first that $\xx^{(2)}_\eps$ is also bounded. Now from the second equation of \eqref{scheme:2-3} stems that $\eps\vv^n_\eps$ converges to zero as $\eps\to0$ for any $0\leq n\leq N_T$. Coming back to the second equation of \eqref{scheme:2-1} we show that for all $n$,
$\eps^{-1}\vv^{(1)}_\eps$ converges when $\eps\rightarrow 0$ and 
$$
\frac{\vv^{(1)}_\eps}{\eps}  \rightarrow \bE(t^n,\yy^n)\wedge \frac{\bB_{\rm ext}(t^n,
\yy^n)}{\|\bB_{\rm ext}\|^2}, \,\,{\rm when}\,\, \eps\rightarrow 0.
$$
Using the first equation of \eqref{scheme:2-1} we conclude that $\xx^{(1)}$ also converges and that its limit $\yy^{(1)}$ is given by the first equation of \eqref{sch:y1}.
Going on with the same arguments shows that
$$
\frac{\vv^{(2)}_\eps}{\eps}  \rightarrow \bE(t^{n+1},2\yy^{(1)}-\yy^n)\wedge \frac{\bB_{\rm ext}(t^{n+1},
2\yy^{(1)}-\yy^n)}{\|\bB_{\rm ext}\|^2}, \,\,{\rm when}\,\, \eps\rightarrow 0
$$ 
and that $\xx^{(2)}$ converges to a $\yy^{(2)}$ given by the second equation of \eqref{sch:y1}. One may then take a limit in the first equation of \eqref{scheme:2-3} and conclude that indeed the limit $\yy^n$ satisfies (\ref{sch:y1}). Again uniqueness supplies the convergence of the whole sequence.
\end{proof}


\subsubsection{A second order L-stable scheme}
Another choice is a combination of Runge-Kutta method (explicit part)
and  an $L$-stable second order SDIRK method in the implicit
part. This implicit scheme should give better stability properties on
the numerical solution with respect to the stiffness parameter $\eps>0$.

We first choose $\gamma>0$ as the
smallest root of the polynomial $\gamma^2 - 2\gamma + 1/2 = 0$, {\it
  i.e.} $\gamma = 1 - 1/\sqrt{2}$, then the scheme is given by the
following two stages. First, we have
\begin{equation}
\label{scheme:3-1}
\left\{
\begin{array}{l}
\ds{\xx^{(1)} \,=\, \xx^n  \,+\, \frac{\gamma\Delta t}{\eps}\,\vv^{(1)},}
\\
\,
\\
\ds{\vv^{(1)} \,=\, \vv^n  \,+\, \frac{\gamma\Delta t}{\eps}\,{\bf F}^{(1)},}
\end{array}\right.
\end{equation}
with
$$
{\bf F}^{(1)} \,:=\, \frac{\vv^{(1)}}{\eps} \wedge \bB_{\rm ext}(t^n, \xx^n) \,+\, \bE(t^n,\xx^n).
$$
For the second stage, we first define  
\begin{equation}
\label{tard}
\hat{t}^{(1)} \,:=\, t^n +
\frac{\Delta t}{2\gamma}, \quad\hat{\xx}^{(1)} \,:=\, \xx^{n}\,+\, \frac{\Delta t}{2\gamma\eps} \vv^{(1)},
\end{equation}
then  the solution $(\xx^{(2)},\vv^{(2)})$ is given by
\begin{equation}
\label{scheme:3-2}
\left\{
\begin{array}{l}
\ds{\xx^{(2)} \,=\, \xx^{n}  \,+\, \frac{(1-\gamma)\Delta t}{\eps} \,\vv^{(1)}  \,+\, \frac{\gamma\Delta t}{\eps}\,\vv^{(2)},}
\\
\,
\\
\ds{\vv^{(2)} \,=\, \vv^{n}  \,+\, \frac{(1-\gamma)\Delta t}{\eps}
  \,{\bf F}^{(1)}  \,+\, \frac{\gamma\Delta t}{\eps}  \,{\bf F}^{(2)}},
 \end{array}\right.
\end{equation}
with 
$$
{\bf F}^{(2)} \,:=\, \frac{\vv^{(2)}}{\eps} \wedge \bB_{\rm ext}\left(\hat{t}^{(1)}, \hat{\xx}^{(1)}\right) \,+\, \bE\left(\hat{t}^{(1)}, \hat{\xx}^{(1)}\right).
$$
Finally, the numerical solution at the new time step is
\begin{equation}
\label{scheme:3-3}
\left\{
\begin{array}{l}
\xx^{n+1} \,=\, \xx^{(2)},
\\
\,
\\
\vv^{n+1} \,=\, \vv^{(2)}.
\end{array}\right.
\end{equation}

Under stability assumptions on the numerical solution to
(\ref{scheme:2-1})-(\ref{scheme:2-3}), we get the following
consistency result in the limit $\eps\rightarrow 0$.

\begin{proposition}[Consistency in the
  limit $\eps\rightarrow 0$ for a fixed $\Delta t$]
\label{prop:3}
Let us consider a time step $\Delta t>0$, a final time $T>0$  and
set $N_T=[T/\Delta t]$. Assume that the sequence $(\xx^n_\eps,\vv^n_\eps)_{0\leq
  n\leq N_T}$ given by 
(\ref{scheme:3-1})-(\ref{scheme:3-3}) is such that for all $1\leq n\leq N_T$,
$\left(\xx^n_\eps,\eps\vv^n_\eps\right)_{\eps>0}$ is uniformly
bounded with respect to $\eps>0$ and $\left(\xx^0_\eps,\eps\vv^0_\eps\right)_{\eps>0}$ converges in the limit $\eps\rightarrow 0$ to some $(\yy^0,0)$. 
Then, for $1\leq n\leq N_T$, $\xx^n_\eps\rightarrow \yy^n$, as $\eps\rightarrow 0$ and the limit $(\yy^n)_{n\geq 1}$ is a consistent second
order approximation of the
guiding center equation, given by 
\begin{equation}
\label{sch:y2}
\left\{
\begin{array}{l}
\ds{{\bf U}^{n} = \bE(t^n,\yy^n)\wedge \frac{\bB_{\rm ext}(t^n,\yy^n)}{\|\bB_{\rm ext}\|^2}},
\\
\,
\\
\ds{\yy^{n+1} = \yy^{n} + {(1-\gamma)\Delta t} \,{\bf U}^{n} \,+\,
  \gamma\Delta t \,{\bf U}^{(1)}},
\end{array}\right.
\end{equation}
where
$$
\hat{\yy}^{(1)} := \yy^{n} + \frac{\Delta t}{2\gamma} \,{\bf
  U}^{n},\quad {\bf U}^{(1)} \,:=\, \bE(\hat{t}^{(1)},\hat{\yy}^{(1)})\wedge \frac{\bB_{\rm ext}(\hat{t}^{(1)},\hat{\yy}^{(1)})}{\|\bB_{\rm ext}\|^2}.
$$
\end{proposition}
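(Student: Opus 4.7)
The plan is to mirror the strategy of Propositions \ref{prop:1} and \ref{prop:2}: extract a (diagonal) convergent subsequence of $(\xx^n_\eps)_n$, propagate uniform bounds through the two Runge-Kutta stages by inverting the implicit velocity equations, pass to the limit to identify the reduced scheme, and then invoke uniqueness of the solution to the limit scheme to lift convergence from subsequences to the whole sequence.

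The induction on $n$ is set up with the hypothesis that $\xx^n_\eps\to\yy^n$ along the extracted subsequence and that $\eps\vv^n_\eps\to 0$, both being consequences of the assumptions at $n=0$. For the inductive step, the first stage (\ref{scheme:3-1}) is rewritten as the linear problem
$$\Bigl(I - \tfrac{\gamma\Delta t}{\eps^2}(\cdot)\wedge \bB_{\rm ext}(t^n,\xx^n_\eps)\Bigr)\vv^{(1)}_\eps \;=\; \vv^n_\eps + \tfrac{\gamma\Delta t}{\eps}\bE(t^n,\xx^n_\eps).$$
The operator on the left is a rotation-type perturbation of the identity in the perpendicular plane whose inverse has operator norm of order $\eps^2/(\gamma\Delta t\|\bB_{\rm ext}\|)$ as $\eps\to 0$. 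Combined with $\eps\vv^n_\eps\to 0$ this yields that $\vv^{(1)}_\eps/\eps$ stays uniformly bounded while $\eps\vv^{(1)}_\eps\to 0$. Multiplying the stage-$1$ equation by $\eps$ and passing to the limit one obtains $\vv^{(1)}_\eps/\eps\to \bU^n$; the intermediate position $\hat\xx^{(1)}_\eps$ from (\ref{tard}) then converges to $\hat\yy^{(1)}$ as defined in the statement.

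For the second stage the identity $(1-\gamma)\Delta t\,{\bf F}^{(1)}/\eps = \tfrac{1-\gamma}{\gamma}(\vv^{(1)}_\eps-\vv^n_\eps)$, read from (\ref{scheme:3-1}), is used to absorb the explicit Runge-Kutta term into an equation of the same structure as the first stage but with $\bE,\bB_{\rm ext}$ evaluated at $(\hat{t}^{(1)},\hat\xx^{(1)}_\eps)$. The same operator inversion shows $\vv^{(2)}_\eps/\eps$ is bounded, $\eps\vv^{(2)}_\eps\to 0$, and that $\vv^{(2)}_\eps/\eps\to \bU^{(1)}$. Combining $\xx^{n+1}_\eps=\xx^{(2)}_\eps$ and $\vv^{n+1}_\eps=\vv^{(2)}_\eps$ from (\ref{scheme:3-3}) with the first line of (\ref{scheme:3-2}) yields
$$\xx^{n+1}_\eps \;=\; \xx^n_\eps + \frac{(1-\gamma)\Delta t}{\eps}\vv^{(1)}_\eps + \frac{\gamma\Delta t}{\eps}\vv^{(2)}_\eps \;\longrightarrow\; \yy^n + (1-\gamma)\Delta t\,\bU^n + \gamma\Delta t\,\bU^{(1)},$$
which is exactly (\ref{sch:y2}), and $\eps\vv^{n+1}_\eps=\eps\vv^{(2)}_\eps\to 0$ closes the induction. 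Since (\ref{sch:y2}) is a well-posed one-step recursion in $\yy^n$, the limit point is uniquely determined and the whole sequence $(\xx^n_\eps)$ converges to $\yy^n$.

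Second order consistency of (\ref{sch:y2}) with the guiding centre ODE $\dot\yy=\bU(t,\yy)$ follows because it is precisely the classical two-stage $L$-stable SDIRK method with parameter $\gamma=1-1/\sqrt{2}$ applied to this ODE, whose order two accuracy is standard. The main obstacle is the careful control of the implicit operator $I-\tfrac{\gamma\Delta t}{\eps^2}(\cdot)\wedge \bB_{\rm ext}$ and especially the algebraic reduction in the second stage: the explicit contribution ${\bf F}^{(1)}$ contains the a priori singular quantity $\vv^{(1)}/\eps$, so one cannot pass to limits in (\ref{scheme:3-2}) directly; rewriting it in terms of $\vv^{(1)}_\eps-\vv^n_\eps$ using the stage-$1$ relation is what allows the analysis to follow the template already used for Proposition \ref{prop:2}.
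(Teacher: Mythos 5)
Your proof is correct and follows essentially the same route as the paper, which omits the argument as ``almost identical'' to that of Proposition~\ref{prop:2}: extract a convergent subsequence of $(\xx^n_\eps)$, obtain boundedness of $\eps^{-1}\vv^{(1)}_\eps$ and $\eps^{-1}\vv^{(2)}_\eps$ by inverting $\Id-\lambda R$ with $\lambda=\gamma\Delta t/\eps^2$, identify the limits stage by stage, and invoke uniqueness of the limit recursion to get convergence of the whole sequence; your rewriting of the stiff explicit term $\tfrac{(1-\gamma)\Delta t}{\eps}\,{\bf F}^{(1)}$ as $\tfrac{1-\gamma}{\gamma}\left(\vv^{(1)}_\eps-\vv^n_\eps\right)$ is a sound way to handle the second stage. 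One small correction: the limit scheme \eqref{sch:y2} is not the $L$-stable SDIRK method itself but the explicit Runge--Kutta tableau with abscissa $1/(2\gamma)$ and weights $(1-\gamma,\gamma)$ applied to the guiding-center field; since $b_1+b_2=1$ and $b_2c_2=\gamma\cdot\tfrac{1}{2\gamma}=\tfrac12$, it is indeed second order, so your conclusion stands.
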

%
We omit the proof of Proposition~\ref{prop:3} as almost identical to the one of Proposition~\ref{prop:2}.

The present scheme is $L$- stable, which means uniformly linearly stable with
respect to $\Delta t$.

\subsection{Third order semi-implicit Runge-Kutta schemes}
A  third order semi-implicit scheme is given by a four stages
Runge-Kutta method introduced in the framework of hyperbolic
systems with stiff source terms \cite{BFR:15}. First,  we set $\alpha=0.24169426078821$, $\beta =
\alpha/4$ and $\eta= 0.12915286960590$ and $\gamma=1/2-\alpha-\beta-\eta$. Then we construct the first
stage as 
\begin{equation}
\label{scheme:4-1}
\left\{
\begin{array}{l}
\ds{\xx^{(1)} \,=\, \xx^n  \,+\, \frac{\alpha\Delta t}{\eps}\,\vv^{(1)},}
\\
\,
\\
\ds{\vv^{(1)} \,=\, \vv^n  \,+\, \frac{\alpha\Delta t}{\eps} \,{\bf F}^{(1)},}
\end{array}\right.
\end{equation}
with 
$$
{\bf F}^{(1)} \,:=\, \frac{\vv^{(1)}}{\eps} \wedge \bB_{\rm ext}(t^n, \xx^n) \,+\,
\bE(t^n,\xx^n).
$$
For the second stage, we have
\begin{equation}
\label{scheme:4-2}
\left\{
\begin{array}{l}
\ds{\xx^{(2)} \,=\, \xx^{n}  \,-\, \frac{\alpha\Delta t}{\eps} \,\vv^{(1)}  \,+\, \frac{\alpha\Delta t}{\eps}\,\vv^{(2)},}
\\
\,
\\
\ds{\vv^{(2)} \,=\, \vv^{n}  \,-\, \frac{\alpha\Delta t}{\eps} \,{\bf
    F}^{(1)} \,+\, \frac{\alpha\Delta t}{\eps}\,{\bf F}^{(2)},}
\end{array}\right.
\end{equation}
with
$$
{\bf F}^{(2)} \,:=\, \frac{\vv^{(2)}}{\eps} \wedge \bB_{\rm ext}\left(t^{n}, \xx^{n}\right) \,+\, \bE\left(t^{n}, \xx^{n}\right).
$$
Then, for the third stage we set 
\begin{equation}
\label{scheme:4-3}
\left\{
\begin{array}{l}
\ds{\xx^{(3)} \,=\,   \xx^{n}  \,+\, \frac{(1-\alpha)\Delta t}{\eps}\,\vv^{(2)} \,+\, \frac{\alpha\Delta t}{\eps}\,\vv^{(3)},}
\\
\,
\\
\ds{\vv^{(3)} \,=\, \vv^{n} \,+\,  \frac{(1-\alpha)\Delta
    t}{\eps}\,{\bf F}^{(2)} \,+\, \frac{\alpha\Delta
  t}{\eps}\,{\bf F}^{(3)},}
\end{array}\right.
\end{equation}
with
$$
\left\{
\begin{array}{l}
\ds{{\bf F}^{(3)} \,:=\, \frac{\vv^{(3)}}{\eps} \wedge \bB_{\rm ext}\left(t^{n+1}, \overline{\xx}^{(2)}\right) \,+\, \bE\left(t^{n+1}, \overline{\xx}^{(2)}\right)},
\\
\,
\\
\ds{\overline{\xx}^{(2)} \,:=\, \xx^{n} + \frac{\Delta t}{\eps}\,\vv^{(2)}.}
\end{array}\right.
$$
Finally, for the fourth stage we set 
\begin{equation}
\label{scheme:4-4}
\left\{
\begin{array}{l}
\ds{\xx^{(4)} \,=\,  \xx^{n} \,+\, \frac{\beta\Delta t}{\eps}\,\vv^{(1)}\,+\, \frac{\eta\Delta t}{\eps}\,\vv^{(2)}\,+\, \frac{\gamma\Delta t}{\eps}\,\vv^{(3)}\,+\, \frac{\alpha\Delta t}{\eps}\,\vv^{(4)},}
\\
\,
\\
\ds{\vv^{(4)} \,=\, \vv^{n} \,+\, \frac{\beta\Delta
    t}{\eps}\,{\bf F}^{(1)}\,+\, \frac{\eta\Delta t}{\eps}\,{\bf F}^{(2)}\,+\,
  \frac{\gamma\Delta t}{\eps}\,{\bf F}^{(3)}\,+\, \frac{\alpha\Delta
    t}{\eps}\,{\bf F}^{(4)},}
\end{array}\right.
\end{equation}
with 
$$
\left\{
\begin{array}{l}
\ds{{\bf F}^{(4)} \,:=\,\frac{\vv^{(4)}}{\eps} \wedge
  \bB_{\rm ext}\left(t^{n+1/2}, \overline{\xx}^{(3)}\right) \,+\,
  \bE\left(t^{n+1/2}, \overline{\xx}^{(3)}\right),}
\\
\,
\\
\overline{\xx}^{(3)} \,:=\, \xx^{n}  \,+\, \frac{\Delta t}{4\eps}
\left( \vv^{(2)}  + \vv^{(3)} \right),
\end{array}\right.
$$
and the numerical solution at the new time step is
\begin{equation}
\label{scheme:4-5}
\left\{
\begin{array}{l}
\xx^{n+1} \,=\, \xx^{n}  \,+\, \frac{\Delta t}{6\eps} \left( \vv^{(2)} \,+\,
  \vv^{(3)}  \,+\, 4\, \vv^{(4)} \right),
\\
\,
\\
\vv^{n+1} \,=\, \vv^{n}  \,+\, \frac{\Delta t}{6\eps} \left( {\bf F}^{(2)} \,+\,
  {\bf F}^{(3)}  \,+\, 4 \,{\bf F}^{(4)} \right).
\end{array}\right.
\end{equation}
As for the previous schemes, under uniform stability assumptions with
respect to $\eps>0$, we prove the following Proposition
\begin{proposition}[Consistency in the
  limit $\eps\rightarrow 0$ for a fixed $\Delta t$]
\label{prop:5}
Let us consider a time step $\Delta t>0$, a  final time $T>0$  and
set $N_T=[T/\Delta t]$. Assume that the sequence $(\xx^n_\eps,\vv^n_\eps)_{0\leq
  n\leq N_T}$ given by 
(\ref{scheme:4-1})-(\ref{scheme:4-5}) is such that for all $1\leq n\leq N_T$,
$\left(\xx^n_\eps,\eps\vv^n_\eps\right)_{\eps>0}$ is uniformly
bounded with respect to $\eps>0$ and $\left(\xx^0_\eps,\eps\vv^0_\eps\right)_{\eps>0}$ converges in the limit $\eps\rightarrow 0$ to some $(\yy^0,0)$. 
Then, for $1\leq n\leq N_T$, 
$\xx^n_\eps\rightarrow \yy^n$, as $\eps\rightarrow 0$ and the limit $(\yy^n)_{n\geq 1}$ is a consistent  third
order approximation of the
guiding center equation provided by the scheme
\begin{equation}
\label{sch:y4}
\left\{
\begin{array}{l}
\ds{{\bf U}^{(2)} = \bE(t^n,\yy^{n})\wedge \frac{\bB_{\rm ext}({t}^{n},{\yy}^{n})}{\|\bB_{\rm ext}\|^2}},
\\
\,
\\
\ds{{\bf U}^{(3)} = \bE(t^{n+1},\yy^{(2)})\wedge \frac{\bB_{\rm ext}(t^{n+1},\yy^{(2)})}{\|\bB_{\rm ext}\|^2}},
\\
\,
\\
\ds{{\bf U}^{(4)} = \bE(t^{n+1/2},\yy^{(3)})\wedge \frac{\bB_{\rm ext}({t}^{n+1/2},{\yy}^{(3)})}{\|\bB_{\rm ext}\|^2}},
\end{array}\right.
\end{equation}
where
$$
\left\{
\begin{array}{l}
\yy^{(2)} \,=\, \yy^n \,+\, \Delta t\,{\bf U}^{(2)}, \\
\,
\\
\ds{\yy^{(3)} \,=\, \yy^n \,+\, \frac{\Delta t}{4}\,\left( {\bf U}^{(2)} \,+\, {\bf U}^{(3)} \right),}
\end{array}\right.
$$
and $$
\yy^{n+1} = \yy^{n} \,+\, \frac{\Delta t}{6} \,\left( {\bf U}^{(2)} \,+\, {\bf
    U}^{(3)} \,+\, 4\,{\bf U}^{(4)}\right).
$$
\end{proposition}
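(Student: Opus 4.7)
The plan is to mimic the proofs of Propositions~\ref{prop:2} and \ref{prop:3}, just with more bookkeeping because the scheme has four stages. Starting from a subsequence of $(\xx^n_\eps)$ converging to some $\yy^n$ (with $\eps\vv^n_\eps\to 0$, to be propagated inductively in $n$), I would bootstrap through the four stages, at each step turning the equation defining $\vv^{(i)}_\eps$ into a linear system of the form $(I-(c_i\Delta t/\eps^2)\,J_{\bB})\,\vv^{(i)}_\eps = \vv^n_\eps + \text{explicit terms}$, where $J_{\bB}\ww=\ww\wedge\bB_{\rm ext}$ satisfies $J_{\bB}^2=-\|\bB_{\rm ext}\|^2\,I$ in the two-dimensional perpendicular plane. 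Inverting this yields that $\eps^{-1}\vv^{(i)}_\eps$ is uniformly bounded and that the $\xx^{(i)}_\eps$ and $\overline{\xx}^{(j)}_\eps$ are bounded as well.

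Concretely, after checking stage~(1), the second equation of \eqref{scheme:4-1} forces $\eps^{-1}\vv^{(1)}_\eps$ to be bounded (using that $\eps\vv^n_\eps$ is bounded) and then the first equation gives boundedness of $\xx^{(1)}_\eps$. The same reasoning applied to \eqref{scheme:4-2}, \eqref{scheme:4-3} and \eqref{scheme:4-4} yields boundedness of $\eps^{-1}\vv^{(j)}_\eps$, $\xx^{(j)}_\eps$ for $j=2,3,4$, together with $\overline{\xx}^{(2)}_\eps$ and $\overline{\xx}^{(3)}_\eps$. Having these in hand, I would pass to the limit in each implicit equation: using the explicit formula for $(I-(c_i\Delta t/\eps^2)J_{\bB})^{-1}$, one finds that in the limit $\eps\to 0$ each $\bF^{(i)}_\eps$ vanishes, so that $\vv^{(j)}_\eps/\eps$ converges to the unique $\UU$ solving $\UU\wedge\bB_{\rm ext}+\bE=0$, i.e. $\UU=\bE\wedge\bB_{\rm ext}/\|\bB_{\rm ext}\|^2$, evaluated at the appropriate $(t,\xx)$. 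This immediately identifies the limits $\vv^{(2)}_\eps/\eps\to\UU^{(2)}$, $\vv^{(3)}_\eps/\eps\to\UU^{(3)}$ (using convergence of $\overline{\xx}^{(2)}_\eps\to\yy^{(2)}$) and $\vv^{(4)}_\eps/\eps\to\UU^{(4)}$ (using $\overline{\xx}^{(3)}_\eps\to\yy^{(3)}$), with the $\yy^{(j)}$ defined by the limits of the corresponding $\xx^{(j)}_\eps$ or $\overline{\xx}^{(j)}_\eps$ formulas.

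Passing to the limit in \eqref{scheme:4-5} then gives $\xx^{n+1}_\eps\to\yy^{n+1}$ with $\yy^{n+1}$ satisfying the last line of \eqref{sch:y4}, and also gives $\eps\vv^{n+1}_\eps\to 0$ (since $\eps\vv^{n+1}_\eps=\eps\vv^n_\eps+(\Delta t/6)(\bF^{(2)}_\eps+\bF^{(3)}_\eps+4\bF^{(4)}_\eps)$ with the three $\bF$'s converging to zero and $\eps\vv^n_\eps\to 0$ by the inductive hypothesis). This closes the induction on $n$ and, because the limit $\yy^n$ is uniquely characterized, the whole sequence (not merely a subsequence) converges. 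The third-order accuracy of the resulting scheme \eqref{sch:y4} with respect to the guiding-centre equation \eqref{traj:limit} is then a standard consequence of the fact that \eqref{sch:y4} is precisely the explicit Runge--Kutta method obtained by taking the $\eps\to 0$ limit of the underlying IMEX tableau of \cite{BFR:15}, which was designed to be of order three.

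The main technical obstacle, as in the two lower-order cases, is the circularity between the three properties \emph{$\eps^{-1}\vv^{(i)}_\eps$ bounded}, \emph{$\xx^{(i)}_\eps$ bounded} and \emph{$\eps\vv^n_\eps\to 0$}: breaking this loop requires the careful inductive ordering described above, first getting boundedness in each stage from the implicit velocity equations (where the $\eps^{-2}$ term dominates and makes the operator $I-(c_i\Delta t/\eps^2)J_{\bB}$ invertible with norm $O(\eps^2)$), then propagating $\eps\vv^n_\eps\to 0$ through \eqref{scheme:4-5}, and only then identifying the limits. Apart from this bookkeeping, nothing new with respect to the proofs of Propositions~\ref{prop:2} and \ref{prop:3} is needed, which is presumably why the authors state the result without a separate proof.
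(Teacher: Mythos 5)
Your proposal follows essentially the same route as the paper's own (sketched) proof: successive boundedness of $\eps^{-1}\vv^{(i)}_\eps$ and $\xx^{(i)}_\eps$ through the four stages via the invertible operators $\Id-(c_i\Delta t/\eps^2)J_{\bB}$, inductive propagation of $\eps\vv^n_\eps\to 0$ and ${\bf F}^{(i)}\to 0$, identification of the limits of $\vv^{(i)}_\eps/\eps$ along a converging subsequence, and conclusion by uniqueness of the limit scheme \eqref{sch:y4}. The argument is correct and matches the paper's proof in substance, with only minor extra commentary (the explicit inverse formula and the order-three remark).
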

\begin{proof}
We follow the lines of the proof of Proposition \ref{prop:2}. In particular we prove successively the boundedness of $\eps^{-1}\vv^{(1)}_\eps$, $\xx^{(1)}_\eps$, $\eps^{-1}\vv^{(2)}_\eps$, $\xx^{(2)}_\eps$, $\eps^{-1}\vv^{(3)}_\eps$, $\xx^{(3)}_\eps$, $\eps^{-1}\vv^{(4)}_\eps$, $\xx^{(4)}_\eps$. Arguing inductively on $n$ one then shows that $\eps \vv^n_\eps$, ${\bf F}^{(1)}$, ${\bf F}^{(2)}$, ${\bf F}^{(3)}$, ${\bf F}^{(4)}$ all converge to zero when $\eps\to0$. Then for any converging subsequence of $\xx^n_\eps$ we may successively identify limits for $\eps^{-1}\vv^{(1)}_\eps$, $\xx^{(1)}_\eps$, $\eps^{-1}\vv^{(2)}_\eps$, $\xx^{(2)}_\eps$, $\eps^{-1}\vv^{(3)}_\eps$, $\xx^{(3)}_\eps$, $\eps^{-1}\vv^{(4)}_\eps$, $\xx^{(4)}_\eps$ and prove that any accumulating point of $\xx^n_\eps$ solves the limiting scheme involving \eqref{sch:y4}. Hence the result.
\end{proof}

\section{Analysis of the first-order semi-implicit scheme}
\setcounter{equation}{0}
\label{sec:4}

Consider the first order Euler semi-implicit scheme
\begin{equation}
\label{traj:ter}
\left\{
\begin{array}{l}
\ds\varepsilon\frac{\xx_k^{n+1} - \xx_k^{n}  }{\Delta t} = \vv_k^{n+1}, 
\\
\,
\\
\ds\varepsilon\frac{\vv_k^{n+1} - \vv_k^{n} }{\Delta t} = \frac{1}{\varepsilon}\vv_k^{n+1} \wedge {\bf B}_{\rm ext}(t^n,\xx_k^n)  + {\bf E}(t^n,\xx_k^n), 
\\
\,
\\
\xx^0_k = \xx_k^0, \, \vv^0_k = \vv^0_k\,.
\end{array}\right.
\end{equation}

We focus here on the case where both $\bE$ and $\bB_{\rm ext}$ are given external fields. 
Since, in this case, there is no coupling between particles we may focus safely on one of them and drop the ${}_k$ suffix.
Then we prove 
\begin{theorem}[Uniform consistency with respect to $\eps$]
We set
$$
\lambda \ :=\ \frac{\Delta t}{\eps^2}\,,
$$
assume that the $(\bE,\bB_{\rm ext}) \in
W_{loc}^{1,\infty}(\RR^+\times\RR^2)$ and consider the
solution $(\xx^n,\vv^n)$ to (\ref{traj:ter}). Then there exist 
positive constants $C$ and $\lambda_0$, only depending on $\bE$, $\bB_{\rm ext}$ and $t^n$ such that when\footnote{Obviously one may replace $1$ by any upper bound on $\Delta t$ but $\lambda_0$ and $C$ would depend on this upper bound.} 
$0\leq \Delta t\leq1$
and $\lambda\geq\lambda_0$
$$
\|\xx^n-\yy^n\|\ \leq\ \frac{C\Delta t}{\lambda}\ \left[ 1+ \left\|\eps^{-1}\vv^0-R[\bE(t^0,\xx^0)]\right\| \right],
$$ 
where $(\yy^n)$ corresponds to the numerical solution of the guiding
center model
\begin{equation}
\left\{
\begin{array}{l}
\ds\frac{\yy^{n+1} - \yy^{n}  }{\Delta t}\ =\
\bE(t^n,\yy^n)\wedge \frac{\bB_{\rm ext}}{\| \bB_{\rm ext}\|^2} , 
\\
\,
\\
\yy^0 = \xx^0\,.
\end{array}\right.
\label{cg:01}
\end{equation}
\label{th:1}
\end{theorem}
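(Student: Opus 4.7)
The plan is to reduce the error analysis to controlling a single auxiliary sequence. First I would rescale velocities by $\ww^n := \vv^n/\eps$. Writing $\bB^n = \bB_{\rm ext}(t^n,\xx^n)$, $\bE^n = \bE(t^n,\xx^n)$ and $R[\bE](t,\xx) := \bE\wedge\bB_{\rm ext}/\|\bB_{\rm ext}\|^2$, the scheme (\ref{traj:ter}) becomes
$$
\xx^{n+1} - \xx^n \,=\, \Delta t\,\ww^{n+1},\qquad \ww^{n+1} - \lambda\,\ww^{n+1}\wedge \bB^n \,=\, \ww^n + \lambda\,\bE^n.
$$
The crucial step is to introduce the drift-corrected discrepancy $\zz^{n+1} := \ww^{n+1} - R[\bE^n]$. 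Using that in the two-dimensional setting the identity $R[\bE^n]\wedge \bB^n = -\bE^n$ holds, substitution in the velocity update yields the compact recurrence
$$
(\Id - \lambda J[\bB^n])\,\zz^{n+1} \,=\, \zz^n \,+\, \bigl(R[\bE^{n-1}] - R[\bE^n]\bigr),
$$
where $J[\bB]\vv := \vv\wedge\bB$, with the convention that for $n=0$ the right-hand side is replaced by the initial discrepancy $\ww^0-R[\bE^0]$. Meanwhile the position update rewrites as
$$
\xx^{n+1} - \xx^n \,=\, \Delta t\,R[\bE(t^n,\xx^n)] \,+\, \Delta t\,\zz^{n+1},
$$
which is precisely the explicit Euler scheme (\ref{cg:01}) for the guiding center equation, perturbed by $\Delta t\,\zz^{n+1}$. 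The whole error thus reduces to summability of $\zz^n$.

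Next I would estimate the $\zz$-recurrence. A direct computation based on $J[\bB^n]^2 = -\|\bB^n\|^2\,\Id$ gives $\|(\Id-\lambda J[\bB^n])^{-1}\| = (1+\lambda^2\|\bB^n\|^2)^{-1/2}$. Provided $\|\bB^n\|$ stays bounded from below along the trajectory, this norm is at most $C/\lambda$ for $\lambda\geq\lambda_0$, while $W^{1,\infty}_{loc}$ regularity together with $\|\xx^{n+1}-\xx^n\|=\Delta t\,\|\ww^{n+1}\|=O(\Delta t)$ gives the Lipschitz bound $\|R[\bE^{n-1}]-R[\bE^n]\|\leq C\Delta t$. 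A geometric iteration then produces
$$
\|\zz^n\| \,\leq\, (C/\lambda)^{n-1}\,\|\ww^0-R[\bE^0]\| \,+\, C\Delta t/\lambda,
$$
and summing for $n\leq N_T$ yields $\Delta t\sum_k\|\zz^k\|\leq C\Delta t\,\bigl(1+\|\ww^0-R[\bE^0]\|\bigr)/\lambda$, with $C$ depending on $t^n$, $\bE$ and $\bB_{\rm ext}$.

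The comparison then closes cleanly: subtracting the guiding-center scheme (\ref{cg:01}) from the rewritten $\xx$-update and invoking Lipschitz continuity of $\xx\mapsto R[\bE(t,\xx)]$ on compacts, one obtains
$$
\|\xx^{n+1}-\yy^{n+1}\|\,\leq\,(1+L\Delta t)\,\|\xx^n-\yy^n\|\,+\,\Delta t\,\|\zz^{n+1}\|,
$$
and a discrete Gronwall argument combined with the preceding summability bound delivers the claimed $C\Delta t/\lambda$ estimate.

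The principal obstacle is making the whole argument self-consistent: the contraction estimate on $\zz^n$ and the Lipschitz control of $R[\bE]$ both require $(\xx^n,\ww^n)$ to stay in a compact set where $\|\bB_{\rm ext}\|$ is bounded from below, yet those a priori bounds themselves rely on $\zz^n$ being small. I would resolve this by a bootstrap induction on $n$: assuming uniform $\eps$-independent bounds on $(\xx^k,\ww^k)$ for $k\leq n$, the above $\zz$-estimate combined with $\ww^{n+1}=R[\bE^n]+\zz^{n+1}$ and the Euler-like position update lets one propagate the bounds to step $n+1$, provided $\lambda_0$ is taken large enough (so that the $C/\lambda$-contraction is genuine) and $\Delta t\leq 1$. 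This fixes $\lambda_0$ and $C$, both depending only on $\bE$, $\bB_{\rm ext}$ and $t^n$, and completes the argument.
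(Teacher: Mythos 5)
Your proposal is correct and follows essentially the same route as the paper: the same drift-corrected variable $\zz^n$, the same resolvent recurrence with $\|(\Id-\lambda J[\bB^n])^{-1}\|=O(1/\lambda)$ (the paper uses the slightly cruder bound $(1+\lambda\|\bB\|)/(1+\lambda^2\|\bB\|^2)$, first for constant unit $\bB_{\rm ext}$ and then in general), the rewriting of the position update as an explicit Euler step for the guiding-center scheme perturbed by $\Delta t\,\zz^{n+1}$, and a discrete Gronwall argument. Two small remarks: your geometric bound should carry the exponent $n$ rather than $n-1$, since $\zz^1$ is already damped by one resolvent application, and it is precisely this extra factor $1/\lambda$ on the initial discrepancy that yields the $\Delta t/\lambda$ coefficient claimed in the theorem; also, the paper dispenses with your bootstrap by estimating $\|\xx^n-\xx^{n-1}\|\leq \Delta t\,(\|\bF\|_{L^\infty}+\|\zz^n\|)$ and absorbing the resulting $K_x\Delta t\,\|\zz^n\|$ into the contraction factor, at the price of working with essentially global Lipschitz bounds on the drift field.
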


\begin{proof}
To start with we analyze the case where $\bE$ is a given bounded
Lipschitz field and $\bB_{\rm ext}$ is constant and of norm $1$.  Our estimates shall be expressed in terms of 
\begin{equation*}
K_0\ =\ \|\bE\|_{L^\infty}\,\quad K_t\ =\ \left\|\frac{\d\bE}{\d t}\right\|_{L^\infty}\,\quad K_x\ =\ \|\nabla_{\xx} \bE\|_{L^\infty}\,.
\end{equation*}
For concision's sake we also introduce the operator to denote $R[\bW]=\ \bW\wedge {\bf B}_{\rm ext}$.

By introducing the key quantity  for $n\geq1$,
$$
\zz^n= \eps^{-1}\vv^n-R[\bE(t^{n-1},\xx^{n-1})],
$$ 
the first equation of \eqref{traj:ter} reads 
$$
\ds \xx^{n}\ =\ \xx^{n-1} +\Delta t\ R[\bE(t^{n-1},\xx^{n-1})]+\Delta t\ \zz^n\,,\quad n\geq1
$$
while $\zz^1 = [\Id-\lambda R]^{-1}(\eps^{-1}\vv^0-R[\bE(t^0,\xx^0)])$ and
$$
\zz^{n+1} = [\Id-\lambda R]^{-1}(\zz^n-R[\bE(t^n,\xx^n)-\bE(t^{n-1},\xx^{n-1})])\,,\quad n\geq1\,.
$$

We first observe that $\|R\|\leq1$ and $[\Id-\lambda R]^{-1}=(\Id+\lambda R)/(1+\lambda^2)$ since $R^2=-\Id$.

This leads to
$$
\|\zz^{n+1}\|\ \leq\ \frac{1+\lambda}{1+\lambda^2}\ \left[(1+K_x\Delta t)\|\zz^n\|\ +\ (K_t+K_x\,K_0)\Delta t\right]\,,\quad n\geq1\,.
$$
Assuming 
$$
\frac{1+\lambda}{1+\lambda^2}(1+K_x\Delta t)\ <\ 1
$$
and introducing 
$$
a\ =\ \frac{1+\lambda}{1+\lambda^2}\ \frac{K_t+K_x\,K_0}{1-\frac{1+\lambda}{1+\lambda^2}(1+K_x\Delta t)}\Delta t
$$
we infer
$$
\|\zz^n\|\ \leq\ a\ +\ \left(\frac{1+\lambda}{1+\lambda^2}(1+K_x\Delta t)\right)^{n-1}\,b\,,\quad n\geq1\,,
$$
where
$$
b\ =\ a\ +\ \frac{1+\lambda}{1+\lambda^2} \left\|\frac1\eps\vv^0-R[\bE(t^0,\xx^0)]\right\|\,.
$$

For comparison we define $\yy^n$ solution to (\ref{cg:01}). Then,
\begin{eqnarray*}
\|\xx^n-\yy^n\| &\leq& (1+K_x\Delta t)\|\xx^{n-1}-\yy^{n-1}\|\ 
\\
&+&\ \Delta t\ \left[a\ +\ \left(\frac{1+\lambda}{1+\lambda^2}(1+K_x\Delta t)\right)^{n-1}\,b\right],\quad n\geq1\,.
\end{eqnarray*}
Hence assuming moreover $K_x>0$ (or replacing $K_x$ with some arbitrary positive number if $\|\nabla_x \bE\|_{L^\infty}=0$) 
\begin{eqnarray*}
\|\xx^n-\yy^n\| &\leq& \frac{a}{K_x}\left[\,1+(1+K_x\Delta t)^n\,\right]
\\
&+&\frac{b\,\Delta t}{(1+K_x\Delta t)[1- \frac{1+\lambda}{1+\lambda^2}]}\,\left[1+\left(\frac{1+\lambda}{1+\lambda^2}\right)^n\right]\,(1+K_x\Delta t)^n\,.
\end{eqnarray*} 
As a conclusion, for any $0\leq\theta<1$, there exists $C_\theta$ such that if
$$
\frac{1+\lambda}{1+\lambda^2}(1+K_x\Delta t)\ \leq\ \theta
$$
then, it yields
\begin{equation}
\|\xx^n-\yy^n\|\ \leq\ C_\theta\ \frac{\Delta t}{\lambda}\ \left[K_0+\frac{K_t}{K_x}+\left\|\frac1\eps\vv^0-R[\bE(t^0,\xx^0)]\right\|\right]\ e^{K_x\,n\Delta t}\,,
\end{equation}
which concludes the proof of Theorem~\ref{th:1} when the magnetic
field is uniform.

We now relax the 
assumption 
that $\bB_{\rm ext}$ is constant and of norm $1$. We set $\mathbf{b}_{\rm ext}=\|\bB_{\rm ext}\|$ and let $R$ be dependent on $t$ and $\xx$. We observe that now $R^2=-\mathbf{b}_{\rm ext}^2 \Id$ so that $[\Id-\lambda R]^{-1}=(\Id+\lambda R)/(1+\lambda^2\mathbf{b}_{\rm ext}^2)$. Then introducing the drift force
$$
\bF(t,\xx)\ =\ \frac{1}{\|\bB_{\rm ext}(t,\xx)\|^2}\ \bE(t,\xx)\wedge \bB_{\rm ext}(t,\xx)
$$
we essentially obtain the same estimates with $\frac{1+\lambda}{1+\lambda^2}$ replaced by
$$
\frac{1}{\Lambda}\ :=\ \left\|\frac{1+\lambda \mathbf{b}_{\rm ext}}{1+\lambda^2 \mathbf{b}_{\rm ext}^2}\right\|_{L^\infty} 
$$
and 
$$
K_0\ =\ \|\bF\|_{L^\infty}\,\quad K_t\ =\ \left\|\frac{\d\bF}{\d t}\right\|_{L^\infty}\,\quad K_x\ =\ \|\nabla_{\xx} \bF\|_{L^\infty}\,.
$$
Indeed introducing for $n\geq1$, 
$$
\zz^{n}=\frac{\vv^n}{\eps}-\bF(t^{n-1},\xx^{n-1}), 
$$
the scheme is written 
$$
\zz^1=[\Id-\lambda
R(t^0,\xx^0)]^{-1}\left(\frac{\vv^0}{\eps}-\bF(t^{0},\xx^{0})\right),
$$
and then
$$
\zz^{n+1}=[\Id-\lambda R(t^n,\xx^n)]^{-1}(\zz^n-(\bF(t^{n},\xx^{n})-\bF(t^{n-1},\xx^{n-1})))\,,\quad n\geq1
$$
together with
$$
\xx^{n}\ =\ \xx^{n-1} +\Delta t\ \bF(t^{n-1},\xx^{n-1})+\Delta t\ \zz^n\,,\quad n\geq1\,.
$$
Mark that if $\lambda\times(\inf\|\bB_{\rm ext}\|) > \sqrt{2}-1 $ then
$$
\Lambda\ \geq\ \frac{1+\lambda^2\times(\inf\|\bB_{\rm ext}\|)^2}{1+\lambda\times(\inf\|\bB_{\rm ext}\|)}\,.
$$
\end{proof}

Note that this result can be slightly improved when we modify the
initial condition of the asymptotic discrete  model. Indeed, consider 
$\yy^n$ solving
\begin{equation}
\left\{
\begin{array}{l}
\ds\frac{\yy^{n+1} - \yy^{n}  }{\Delta t}\ =\ \bE(t^n,\yy^n)\wedge \bB_{\rm ext} , 
\\
\,
\\
\yy^0 = \xx^0+\eps\,(\vv^0\wedge \bB_{\rm ext}+\eps\,\bE(t^0,\xx^0))\,.
\end{array}\right.
\end{equation}
The gain is that now
$$
\|\xx^1-\yy^1\|\ \leq\ \frac{\Delta t}{\lambda}\ \left[\,K_x\Delta t\,+\,\frac{1+\lambda}{1+\lambda^2}\,\right]\ \left\|\frac1\eps\vv^0-R[\bE(t^0,\xx^0)]\right\| 
$$
since 
$$
\zz^1\ =\ \frac1\lambda R\left[\frac1\eps\vv^0-R[\bE(t^0,\xx^0)]\right]\ -\  \frac1\lambda R\,[\Id-\lambda R]^{-1}\left[\frac1\eps\vv^0-R[\bE(t^0,\xx^0)]\right]\,.
$$
This leads, for $n\geq1$ to
\begin{eqnarray*}
\ds\|\xx^n-\yy^n\| &\leq&\ds \frac{a}{K_x}\left[\,1+(1+K_x\Delta t)^{n-1}\,\right]
\\
&+&\frac{b\,\Delta t}{1-
  \frac{1+\lambda}{1+\lambda^2}}\,\frac{1+\lambda}{1+\lambda^2}\,\left[1+\left(\frac{1+\lambda}{1+\lambda^2}\right)^{n-1}\right]\,(1+K_x\Delta
t)^{n-1}
\\
&+&\ds\frac{\Delta t}{\lambda}\ \left[\,K_x\Delta t\,+\,\frac{1+\lambda}{1+\lambda^2}\,\right]\ \left\|\frac1\eps\vv^0-R[\bE(t^0,\xx^0)]\right\|\ (1+K_x\Delta t)^{n-1}
\end{eqnarray*}
then with notation as above there exists a constant $C=C_\theta \,(K_0+\frac{K_t}{K_x}+K_x+1)e^{K_x
  t^n}>0$ such that
\begin{equation}
\|\xx^n-\yy^n\|\ \leq\ C\ \frac{\Delta t}{\lambda}\ \left[1+\left(\frac1\lambda+\Delta t\right)\left\|\frac1\eps\vv^0-R[\bE(t^0,\xx^0)]\right\|\right]\,.
\end{equation}

Concerning the analysis of high-order schemes, it is not
straightforward to adapt directly the strategy of Theorem
\ref{th:1}. Indeed,  the use of a semi-implicit scheme does not
necessarily guarantee  that the particle trajectories are under
control. 
\begin{remark}
Consider the scheme
(\ref{scheme:2-1})-(\ref{scheme:2-3}) in the simplest situation where
the electric field is zero and the magnetic field is ${\bf B}_{\rm
  ext}=(0,0,1)$, we show that the scheme preserves the kinetic energy
and we have
$$
\left\| {\vv^{n}} \right\|^2  = \left\| {\vv^{0}}
\right\|^2,  \quad \forall\, n\, \in\,\NN, 
$$ 
hence as $\eps$ goes to zero, the velocity $\vv^n/\eps$ cannot
converge to the null guiding center velocity except if the initial
velocity does. Fortunately, for the other
high order schemes (\ref{scheme:3-1})-(\ref{scheme:3-3}) and
(\ref{scheme:4-1})-(\ref{scheme:4-5}),  kinetic energy is  dissipated
and converges to 0.
\label{rem:nc}
\end{remark}

\section{Numerical simulations}
\label{sec:5}
\setcounter{equation}{0}

In this section, we discuss some examples to validate and to compare
the different time discretization schemes. We
first consider the single motion of a particle under the effect of a given
electromagnetic field. It allows us to  illustrate the
ability of the semi-implicit schemes
to capture the guiding center velocity with large time step $\Delta t $ in the
limit $\varepsilon\rightarrow 0$.

 Then we consider the Vlasov-Poisson system
with an external magnetic field.  A classical particle-in-cell method
is applied with different time discretization techniques to compute  the particle
trajectories. Hence this collection of charged particles move and
give rise to a self-consistent electric  field, obtained by solving
numerically the Poisson equation in a space grid. 

\subsection{One single particle motion}
Before going to the
statistical descriptions, let us investigate the accuracy and
stability properties of the semi-implicit algorithms presented in
Section~\ref{sec:3}  on the motion of individual particles in a given
electromagnetic field. 

Here we consider  an electric field $\bE=-\nabla\phi$, where
$$
\phi(\xx) = \frac{1}{2}\,\left(\|\xx\|^2 + \alpha\,\cos^2(2\pi\,y)\right), \quad \xx=(x,y)\in\RR^2,
$$
with $\alpha=0.02$ and a magnetic field
$\bB(\xx)=1\,+\,10^{-1}\,\sin(2\pi x)$ with $\xx=(x,y)\in\RR^2$. We
choose for all simulations $\Delta t=0.1$ and the initial data as $\xx^0=(1,1.4)$ and $\eps^{-1}\,\vv^0=(3,5)$, such that
the initial data $\zz^0=\eps^{-1}\vv^0 + {\bf F}(0,\xx^0)$ is bounded with respect to $\eps>0$.

\begin{figure}[!t]
\begin{center}
 \begin{tabular}{cc}
 \includegraphics[width=7.cm,height=7.25cm]{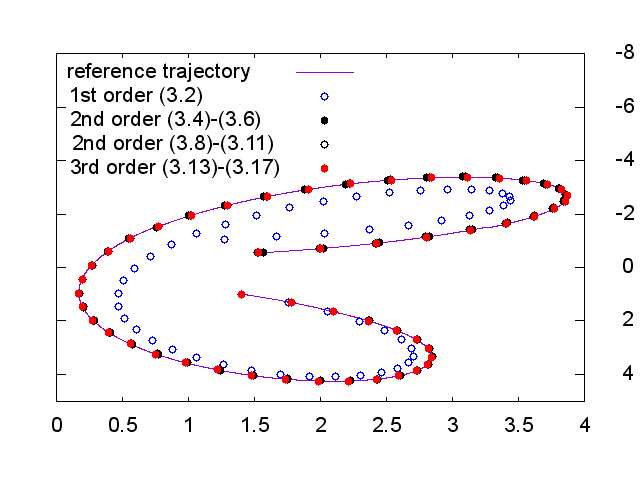} &    
\includegraphics[width=7.cm,height=7.25cm]{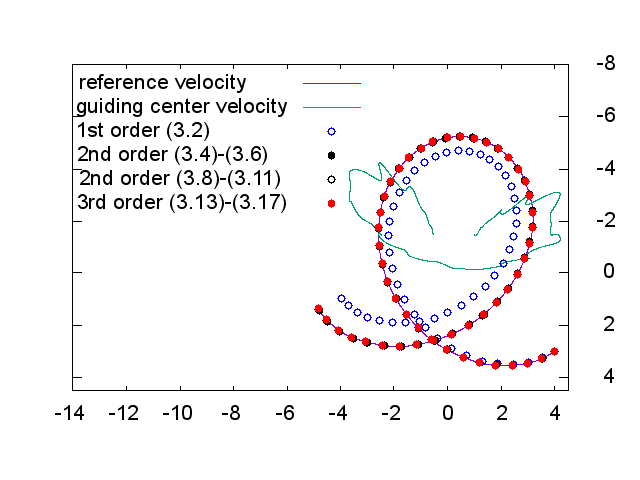} 
\\
\includegraphics[width=7.cm,height=7.25cm]{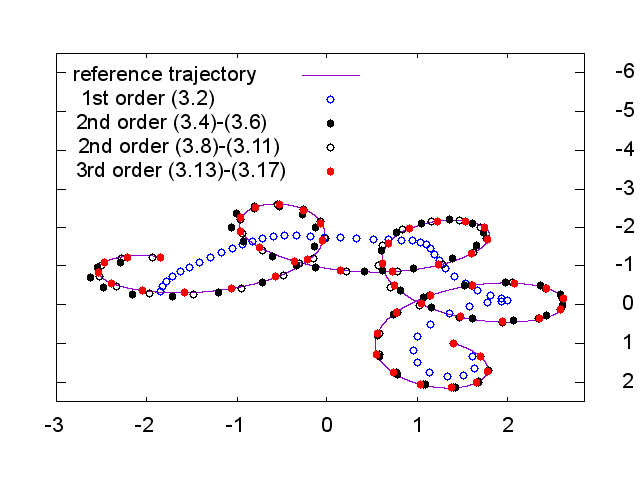} &    
\includegraphics[width=7.cm,height=7.25cm]{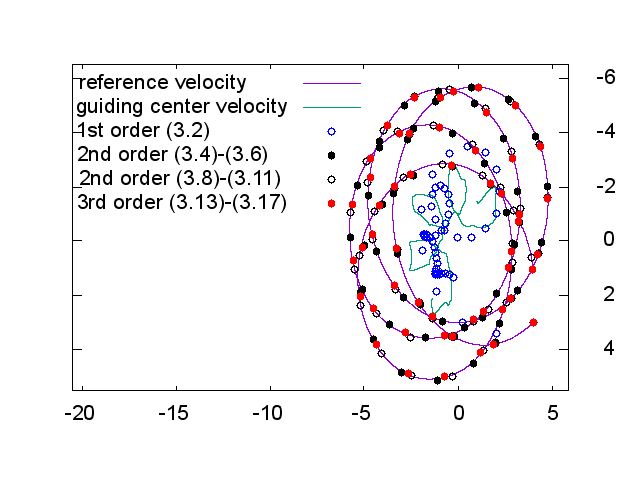} 
\\
(a)  & (b)  
\end{tabular}
\caption{\label{fig:single1}
{\bf One single particle motion.} Numerical solution obtained with a large time
step $\Delta t= 0.1$ with the 1st order scheme (\ref{scheme:0}), the
2nd order schemes 
(\ref{scheme:2-1})-(\ref{scheme:2-3})  and 
(\ref{scheme:3-1})-(\ref{scheme:3-3}) and the third order scheme
(\ref{scheme:4-1})-(\ref{scheme:4-5}) for  $\eps=1$ (top) and
$\eps=5.\,10^{-1}$ (bottom) : \\
(a) particle trajectory in physical space $(\xx^n)_{0\leq n\leq N_T}$\\
(b) particle velocity $(\vv^n)_{0\leq n\leq N_T}$.}
 \end{center}
\end{figure}

\begin{figure}[!t]
\begin{center}
 \begin{tabular}{cc}
\includegraphics[width=7.cm,height=7.25cm]{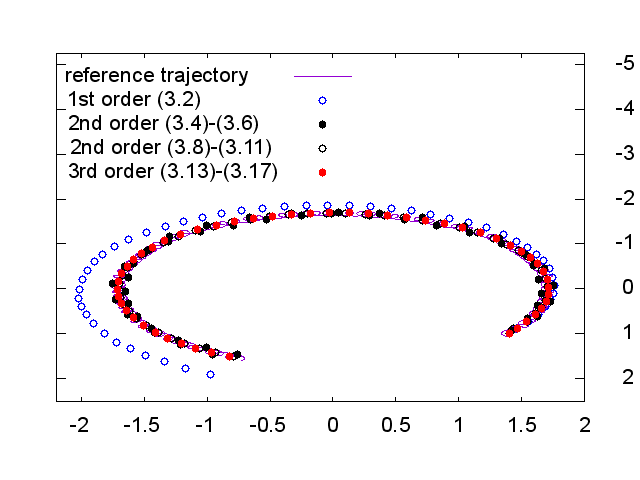} &    
\includegraphics[width=7.cm,height=7.25cm]{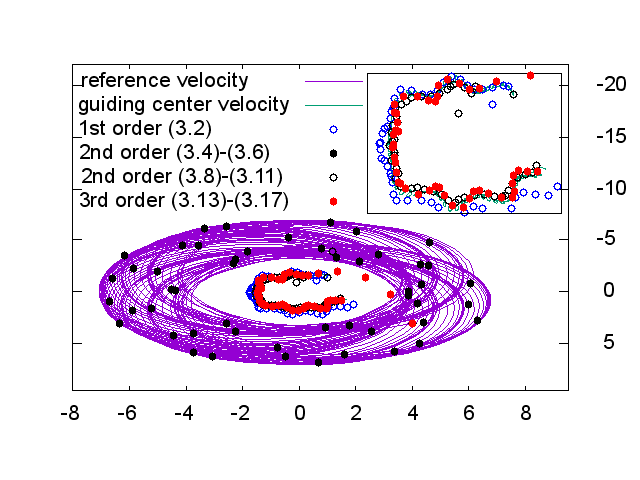} 
\\
\includegraphics[width=7.cm,height=7.25cm]{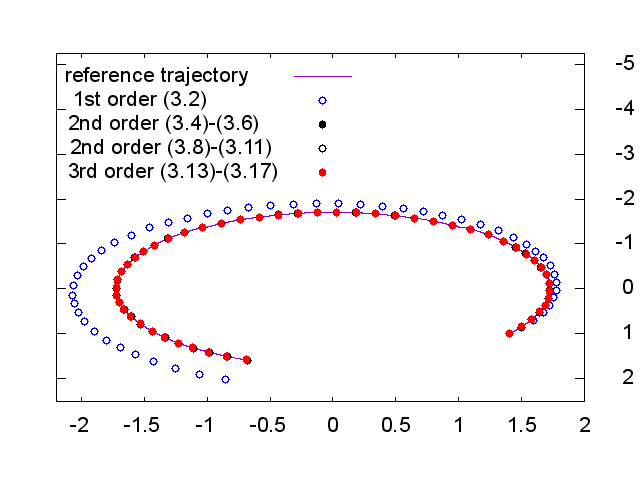} &    
\includegraphics[width=7.cm,height=7.25cm]{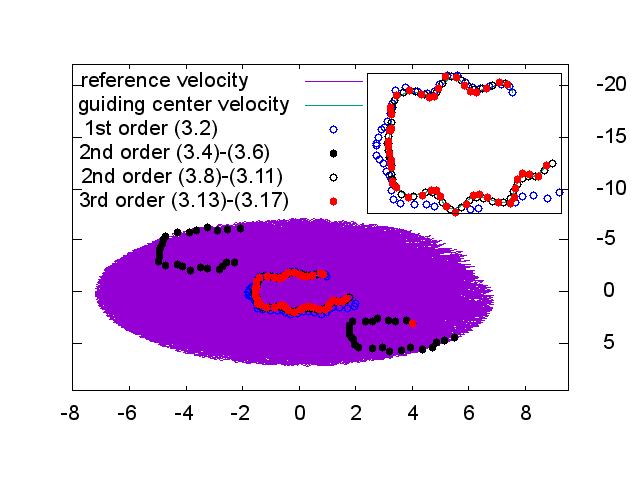} 
\\
(a)  & (b)  
\end{tabular}
\caption{\label{fig:single2}
{\bf One single particle motion.} Numerical solution obtained with a large time
step $\Delta t= 0.1$ with the 1st order scheme (\ref{scheme:0}), the
2nd order schemes
(\ref{scheme:2-1})-(\ref{scheme:2-3})  and 
(\ref{scheme:3-1})-(\ref{scheme:3-3}) and the third order scheme
(\ref{scheme:4-1})-(\ref{scheme:4-5}) for  $\eps=10^{-1}$ (top) and
$\eps=10^{-2}$ (bottom) : \\
(a) particle trajectory in physical space $(\xx^n)_{0\leq n\leq N_T}$\\
(b) particle velocity $(\vv^n)_{0\leq n\leq N_T}$.}
 \end{center}
\end{figure}

Thus,  we apply the schemes
proposed in Section\,\ref{sec:3} to compute a numerical solution to
(\ref{traj:bis})  and present the particle trajectory and velocity in Figures
\ref{fig:single1} and \ref{fig:single2}. These results are compared
with those  obtained with a
fourth order Runge-Kutta scheme using a small time step. 

In Figure \ref{fig:single1}, we first investigate the case where
$\eps$ is of order one ($\eps=1$ and $0.5$), which is the non stiff regime. On the left column, we
clearly observe that the space trajectory obtained from high order
schemes agree very well with the reference trajectory, whereas after
few time steps the first order scheme does not give satisfying
results. On the right hand side,  we present the evolution of the
velocity at each time step and compare it with the reference velocity
and the guiding center velocity. In this non stiff regime, the
velocity obtained from high order schemes coincides with the reference
velocity and the guiding center velocity is meaningless. Therefore,
this first test illustrates the ability of high order schemes to
describe accurately the particle motion in phase space when $\eps\sim 1$.
   
In Figure \ref{fig:single2}, we now propose the numerical results when
$\eps\ll 1$, that is, $\eps=0.1$ and $\eps=0.01$, which corresponds to
the high field regime. In that case, the space trajectory in the orthogonal plane to the magnetic field 
can be decomposed into a relatively slow motion due to the guiding
center velocity 
$$
\bF(t,\xx)\ =\ \frac{1}{\|\bB_{\rm ext}(t,\xx)\|^2}\ \bE(t,\xx)\wedge \bB_{\rm ext}(t,\xx)
$$
along the field line and a fast circular motion with a frequency of
order $1/\varepsilon$. Our aim here is to capture the slow motion using a fixed time step
$\Delta t=0.1$
independently of the value $\varepsilon\ll 1$. 

On the one hand, the space trajectory (left column of Figure \ref{fig:single2}) of the numerical
solution remains stable for various $\eps>0$ even if we do not solve
the fast scales. Moreover, when $\eps\rightarrow 0$, the numerical
solution approaches the correct trajectory and fast fluctuations are
somehow filtered thanks to the implicit treatment of the velocity
${\bf v}^n$. As in the previous simulations, we observe a discrepancy
between the first order scheme and other high order
schemes for large time. 

Furthermore, we focus on the velocity variable
$(\vv^n)_{0\leq n\leq N_T}$ and compare its time evolution with the
reference velocity and the guiding center velocity  (right column of Figure \ref{fig:single2}). Since we use
a large time step $\Delta t=0.1$, we cannot expect to follow all the details of
the velocity variable, but only the slow motion corresponding to the
guiding center velocity.  Now we clearly observe a different behavior
of the numerical solutions. On the one hand, the scheme
(\ref{scheme:2-1})-(\ref{scheme:2-3}), which preserves the kinetic
energy $\frac{1}{2}\,|\vv^n|^2$ when ${\bf E}=0$ and ${\bf B}_{\rm
  ext}=(0,0,1)$ (see Remark \ref{rem:nc}), still oscillates with a large amplitude   and the
velocity does not coincide with the guiding center velocity. On the
other hand, the schemes (\ref{scheme:0}),  (\ref{scheme:3-1})-(\ref{scheme:3-3}) and
(\ref{scheme:4-1})-(\ref{scheme:4-5}) are more dissipative and after
few time steps the velocity follows the line of the guiding center
velocity (see the zoom on the right column of Figure \ref{fig:single2}). Hence the amplitude of oscillations in the physical space are diminishing and the
particle follows the trajectory corresponding to the guiding center
model (\ref{eq:gc}).

As a  conclusion,  these elementary numerical simulations  confirm
the ability of the semi-implicit discretization to capture the slow
motion corresponding to the guiding center model (\ref{eq:gc})
uniformly with respect to $\varepsilon\ll 1$ and the interest of high
order  time discretization for the long time behavior of the solution.
   
\subsection{Diocotron instability}

We now consider the diocotron instability \cite{davidson}  for an annular electron
layer usually described by the guiding center model
(\ref{eq:gc}). This instability is well studied numerically as in \cite{filbet-yang, bibP}. It may give rise to electron vortices,
which is the analog of the Kelvin-Helmholtz fluid dynamics and may
occur when charge neutrality is not locally maintained. 

Here we want to investigate the development of such instability when
we consider the Vlasov-Poisson system with an external magnetic field
(\ref{eq:vlasov2d}). It is expected that such instability holds true
for large magnetic fields, whereas dissipative effects dominate when
the magnetic field is not large enough to confine
particles. 

Therefore, we perform numerical simulation using a
particle-in-cell method  \cite{birdsall} for the Vlasov equation
(\ref{eq:vlasov2d}), where the particle trajectories are approximated
thanks to the third order semi-implicit scheme
(\ref{scheme:4-1})-(\ref{scheme:4-5}). On the other hand, we compute
an approximation of the guiding center model using a finite difference
method developed in \cite{filbet-yang}. This reference solution will
be used to compare our results obtained from the Vlasov-Poisson system
with a large magnetic fields.

The initial density is given by
\begin{equation*}
 \rho_0(\xx)=
 \left\{
 \begin{array}{ll}
  (1+\alpha\cos(\ell\theta))\exp{(-4(\|\xx\|-6.5)^2)},&\text{if  }
  r^-\leq \|\xx \| \leq r^+,\\[3mm]
  0,&\text{otherwise},
 \end{array}
 \right.
\end{equation*}
where $\alpha$ is a small parameter, $\theta=\text{atan}(y/x)$, with
$\xx=(x,y)\in\RR^2$. In the following tests, we take $\alpha=0.01$, $r^-=5$, $r^+=8$, $\ell=7$.

Furthermore, we will also consider the Vlasov-Poisson system
(\ref{eq:vlasov2d}) with an external magnetic fiel with the initial datum $f_0$
$$
f_0(\xx,\vv) = \frac{\rho_0(\xx)}{2\pi}\, \exp\left(-\frac{\|\vv\|^2}{2}\right), \quad (\xx,\vv)\in\RR^4.
$$
A particle method with a third order semi-implicit solver
(\ref{scheme:4-1}))-(\ref{scheme:4-5}) will be applied for different
values of $\eps=  1$, $10^{-1}$ and $10^{-2}$.

For both systems a high order finite difference scheme in Cartesian coordinates
will be applied \cite{filbet-yang}  for the  numerical approximation
of
the Poisson equation in a disc domain. We choose a grid with $100\times 100$ points in the physical space and $400$ particles per cell for the discretization of the velocity space.

We define the total energy $\mathcal{E}(t)$ as the sum of the potential
energy $\mathcal{E}_{\rm pot}(t)$ and the kinetic energy $\mathcal{E}_{\rm
  kin}(t)$ with 
$$
\mathcal{E}_{\rm pot}(t) \,=\, \frac{1}{2}\int_{\RR^2} |\nabla\phi(t,\xx)|^2 d\xx,
\quad{\rm and}\quad \mathcal{E}_{\rm kin}(t) \,=\, \frac{1}{2}\int_{\RR^4} f(t,\xx,\vv)
\,|\vv|^2 d\xx d\vv.
$$ 
For the Vlasov-Poisson system (\ref{eq:vlasov2d}), the total energy is exactly conserved with respect to time.

First, we consider the case where $\eps=1$, that is, the particle
trajectories do not coincide with the trajectory corresponding to the
guiding center model (\ref{eq:gc}). On the one hand, we present in
Figures~\ref{fig:diocotron1}, the evolution of relative energy with respect to
the initial data  $t\mapsto \mathcal{E}(t)-\mathcal{E}(0)$ and the $L^\infty$ norm of the self-consistent
electric field $t\mapsto \|\bE(t)\|_{\infty}$. The discrete  total
energy is not exactly preserved but it oscillates around zero  and
these variations remain relatively small compared to the variations of
the kinetic and
potential energy which also oscillate with a frequency around
$1/2\pi$. The same phenomenon can be observed on the time evolution of
the $L^\infty$ norm of the electric field.
On the other hand  in Figure~\ref{fig:diocotron2}, we plot
the time evolution of the charged density and observe that when the
amplitude of the external magnetic field   $\|\bB_{\rm}\|$ is of order
one, the plasma is not well confined and does not develop any
instability. The density seems to  oscillate around the 
steady state.

\begin{figure}
\begin{center}
 \begin{tabular}{cc}
\includegraphics[width=7.cm, height=7.25cm]{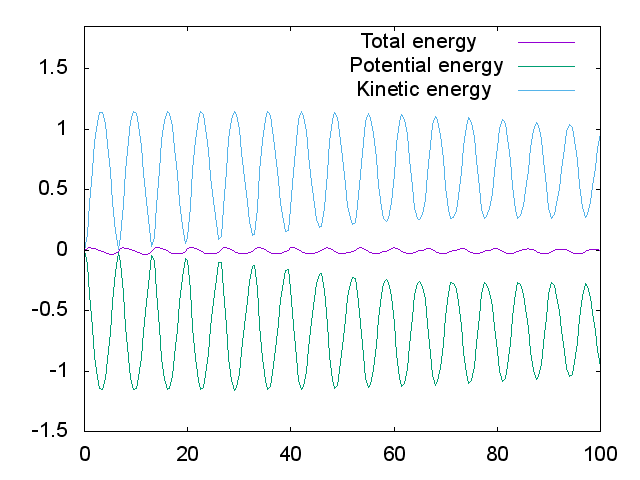}   &    
\includegraphics[width=7.cm,height=7.25cm]{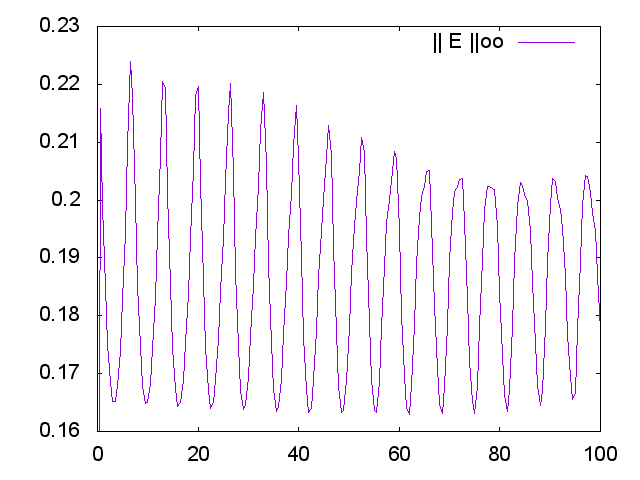}
\\
(a)  Relative Energy                               &    
(b) $\|\bE(t)\|_\infty$  
\end{tabular}
\caption{\label{fig:diocotron1}  {\bf Diocotron instability $\eps=1$.} Time evolution of (a) relative
  energy  $\mathcal{E}(t)-\mathcal{E}(0)$ with respect to the initial
  one and (b) $\|\bE(t)\|_\infty$. For $\eps \sim 1$ the instability
  does not occur.}
 \end{center}
\end{figure}

\begin{figure}
\begin{center}
 \begin{tabular}{cc}
\includegraphics[width=7.cm,height=7.25cm]{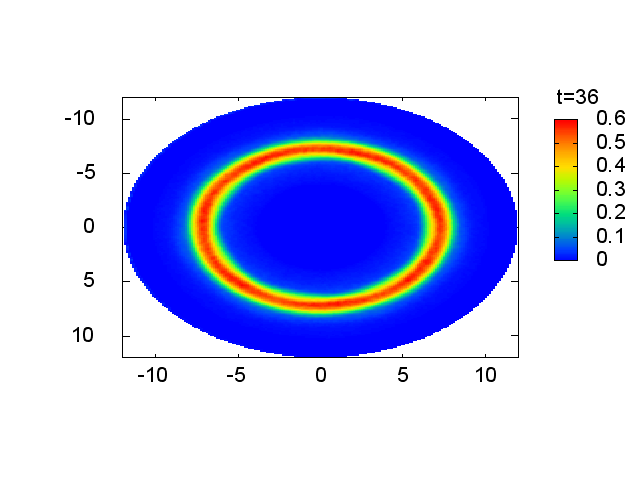}     &  
\includegraphics[width=7.cm,height=7.25cm]{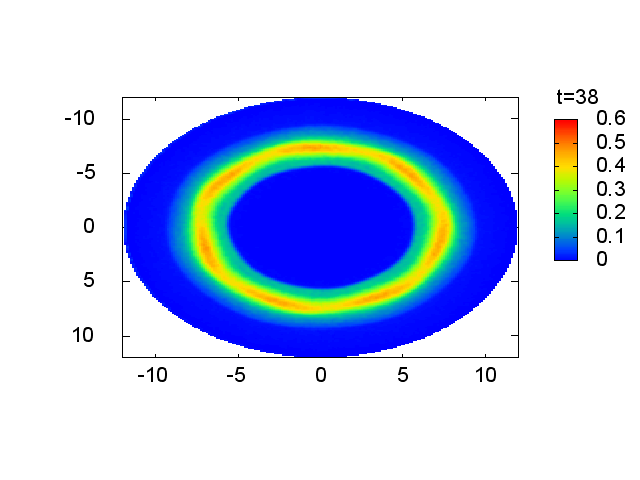}  
\\
\includegraphics[width=7.cm,height=7.25cm]{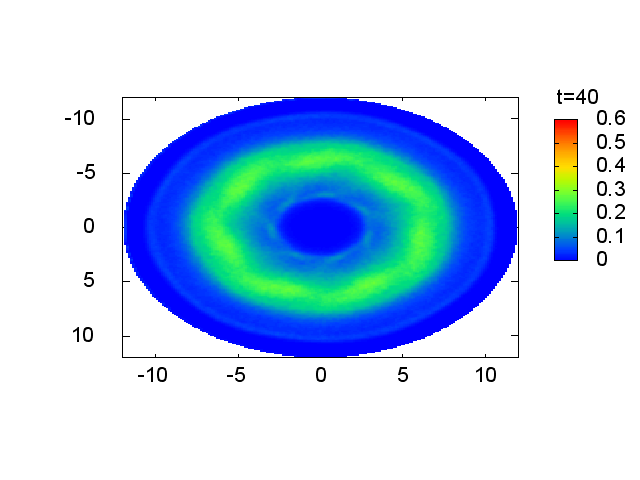}     &  
\includegraphics[width=7.cm,height=7.25cm]{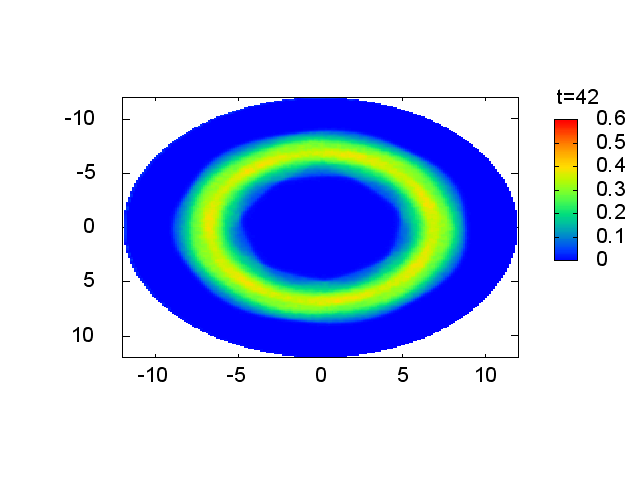}  
 \end{tabular}
\caption{\label{fig:diocotron2}  {\bf Diocotron instability $\eps=1$.} Time evolution of the density
  $\rho$ for time $t=36$, $t=38$, $t=40$ and $t=42$ units.  For $\eps \sim 1$ the instability
  does not occur.}
 \end{center}
\end{figure}

Then, we take a small value $\eps=0.01$, and for this case we present a comparison  between the finite difference approximation to the
guiding center model  (\ref{eq:gc}) and a particle method for the Vlasov-Poisson system
(\ref{eq:vlasov2d}), using the third order
semi-implicit scheme (\ref{scheme:4-1})-(\ref{scheme:4-5}). The
numerical results are presented in Figures~\ref{fig:diocotron3} and \ref{fig:diocotron4}.  

\begin{figure}
\begin{center}
 \begin{tabular}{cc}
\includegraphics[width=7.cm, height=7.25cm]{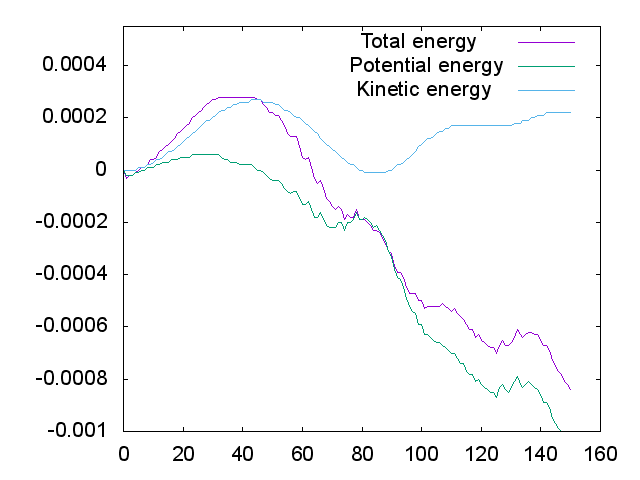}   &    
\includegraphics[width=7.cm,height=7.25cm]{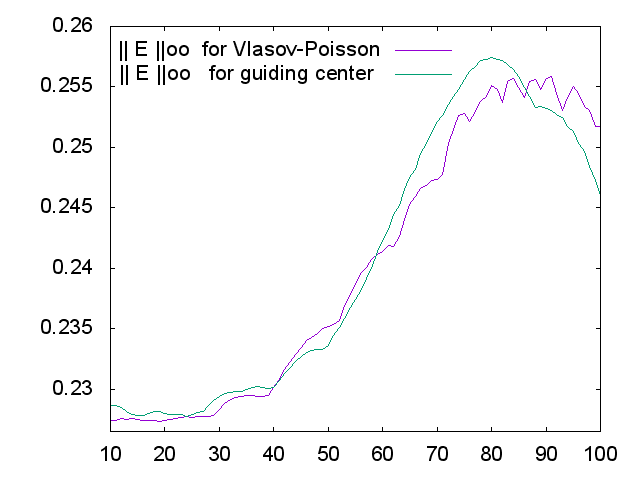}
\\
(a)  Relative Energy                               &    
(b) $\|\bE(t)\|_\infty$  
\end{tabular}
\caption{\label{fig:diocotron3}{\bf Diocotron instability $\eps=10^{-2}$.} Time evolution of
  the relative energy with respect to the initial one (a) for the Vlasov-Poisson
  system (\ref{eq:vlasov2d}) (b) for the guiding center model (\ref{eq:gc}).}
 \end{center}
\end{figure}

Let us emphasize  that for the guiding center model (\ref{eq:gc}), that is in the limit
$\eps\rightarrow 0$, the potential energy $\mathcal{E}_{\rm pot}$ is
conserved with respect to time. Therefore, for  $\eps\ll 1$, it is
expected that both the variations of $\mathcal{E}_{\rm pot}$ and
$\mathcal{E}_{\rm kin}$ are small.  In Figure \ref{fig:diocotron3}, we
can indeed observe that the variation of both quantities  $\mathcal{E}_{\rm pot}$ and
$\mathcal{E}_{\rm kin}$ are varying with a small amplitude of order
$10^{-3}$. In that 
case, 
we do not see any oscillations since a large
time step is used. Moreover, the second picture in
Figure \ref{fig:diocotron3} represents the time evolution of
$\|\bE(\cdot)\|_{\infty}$ for the Vlasov-Poisson system  (\ref{eq:vlasov2d}) and the guiding
center model (\ref{eq:gc}). Both results agree well which 
illustrates 
the accuracy of the particle method in the limit $\eps\ll 1$.

Finally in Figure \ref{fig:diocotron4}, we present a comparison between
the density $\rho$ obtained from the Vlasov-Poisson system discretized
with the particle method and the one corresponding to the guiding center model discretized
with a finite difference method.  These figures show the
development of the diocotron instability on the density $\rho$ for
both models. Indeed in this regime ($\eps\ll 1$), it is expected that the
density $\rho^\eps$ computed from the Vlasov-Poisson system obeys to
the same evolution as the one of the guiding center model. Once again, the results agree very well
and it is remarkable that the particle-in cell method does not suffer
from too many fluctuations. The vortices are well described even for large time
$t \sim 120$.

\begin{figure}
\begin{center}
 \begin{tabular}{cc}
\includegraphics[width=7.cm,height=7.cm]{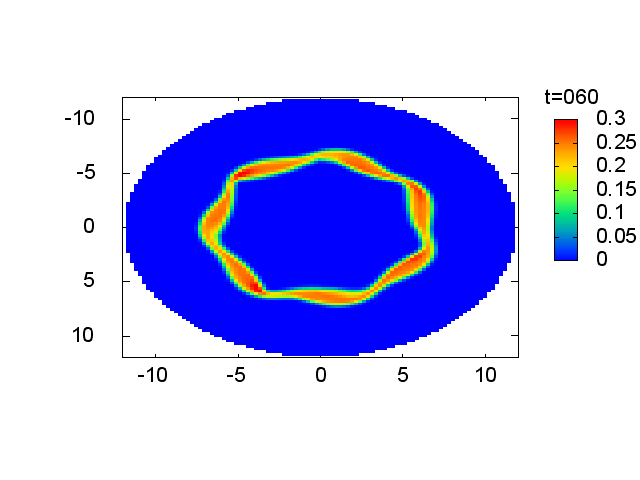}     &  
\includegraphics[width=7.cm,height=7.cm]{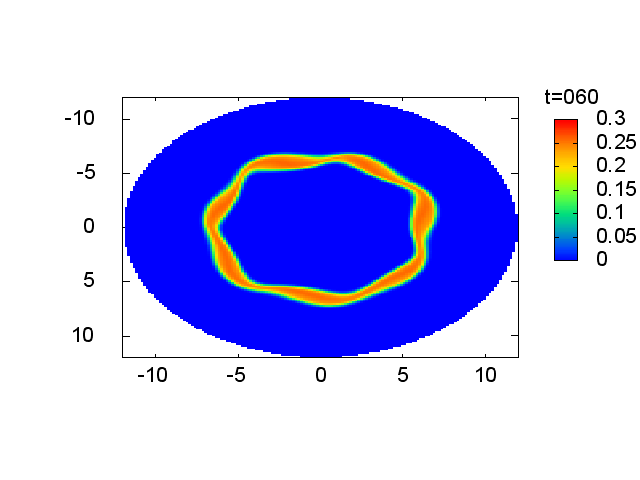}  
\\
\includegraphics[width=7.cm,height=7.cm]{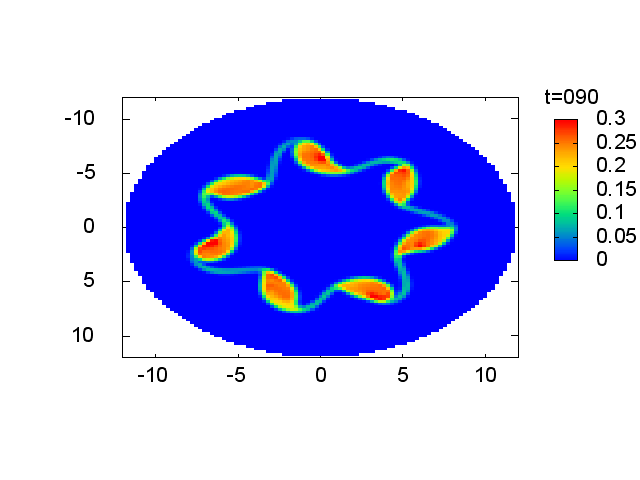}     &  
\includegraphics[width=7.cm,height=7.cm]{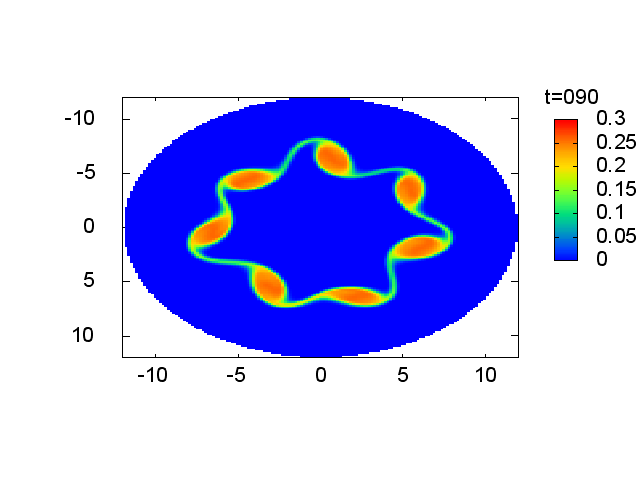}  
\\
\includegraphics[width=7.cm,height=7.25cm]{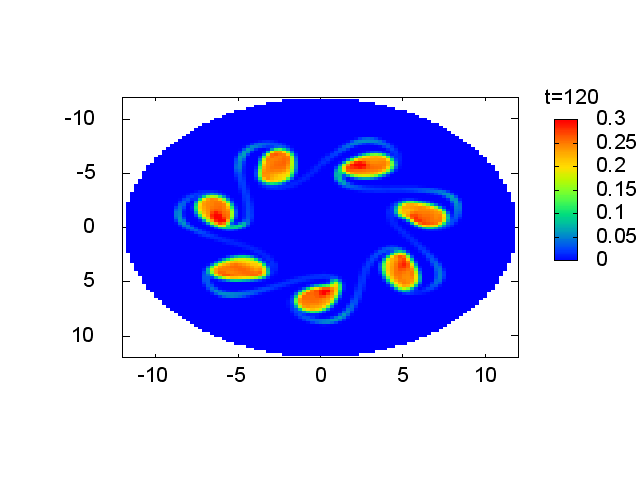}     &  
\includegraphics[width=7.cm,height=7.25cm]{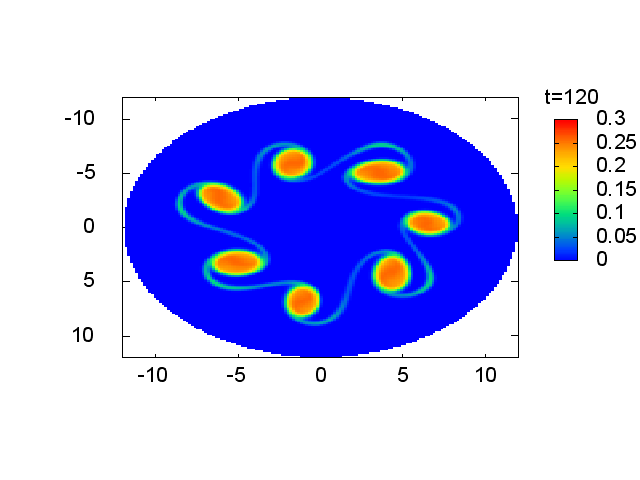}  
 \\
(a) & (b)
\end{tabular}
\caption{\label{fig:diocotron4} {\bf Diocotron instability $\eps=10^{-2}$.} Time evolution of the density
  $\rho$ for time $t=60$, $t=90$, and $t=120$ units for (a) the
  Vlasov-Poisson system (\ref{eq:vlasov2d}) and (b) the guiding center
  model (\ref{eq:gc}).}
 \end{center}
\end{figure}

\section{Conclusion and perspective}
\label{sec:6}
\setcounter{equation}{0}
In this paper we proposed a class of semi-implicit time discretization
techniques for particle-in cell simulations. The main feature of
this approach is to guarantee the accuracy and stability when the
amplitude of the magnetic field becomes large and to get the correct
long time behavior (guiding center approximation). We formally showed
that the present schemes preserve the initial order of accuracy when $\eps\rightarrow 0$. Furthermore,  we performed a complete analysis of the first order semi-implicit scheme
when we consider a given and smooth electromagnetic field
$(\bE,\bB_{\rm ext})$. 

The time discretization techniques proposed in this paper seem to be
a very simple and efficient tool to filter fast oscillations and have
nice stability and consistency properties in the limit
$\eps\rightarrow 0$. However, a  complete analysis of high
order semi-implicit schemes is still missing. The main issue is to control the space trajectory $(\xx^n)_n$ uniformly with
respect to $\eps$ and as we have shown in Remark \ref{rem:nc}, the use of a semi-implicit scheme does not
necessarily guarantee  that the particle trajectories are under
control. A complete analysis of high order schemes is currently under study.

On the other hand, the present techniques will be applied to
more advanced problems as the three dimensional Vlasov-Poisson system  when the magnetic
field is non uniform and the particle trajectories become more complicated.

\section*{Acknowledgements}
FF was supported by the EUROfusion Consortium and has received funding
from the Euratom research and training programme 2014-2018 under grant
agreement No 633053. The views and opinions expressed herein do not
necessarily reflect those of the European Commission.

LMR was supported in part by the ANR project BoND (ANR-13-BS01-0009-01).

\bibliographystyle{plain}

\begin{flushleft} \signFF \end{flushleft}
\begin{flushright} \signLMR \end{flushright}

\end{document}